\newtheorem{theorem}{Theorem}[section]
\newtheorem{proposition}[theorem]{Proposition}
\newtheorem{lemma}[theorem]{Lemma}
\newtheorem{corollary}[theorem]{Corollary}
\theoremstyle{remark}
\newtheorem{question}[theorem]{Question}
\newtheorem{remark}[theorem]{Remark}
\newtheorem{example}[theorem]{Example}
\newcommand{\CC}{\mathbb{C}}
\newcommand{\tnz}{\otimes}
\newcommand{\im}{\mathrm{im}}
\newcommand{\tr}{\mathrm{tr}}
\newcommand{\ax}{\langle\ushort X\rangle}
\newcommand{\ua}{\ushort a}
\def\ben{\begin{enumerate} }
\def\beq{\begin{equation} }
\def\een{\end{enumerate} }
\def\eeq{\end{equation} }
 \DeclareMathOperator{\disc}{disc}
\newcommand{\cD}{\mathcal D}
\newcommand{\N}{\mathbb{N}}
\newcommand{\Q}{\mathbb{Q}}
\newcommand{\Z}{\mathbb{Z}}
\newcommand{\ul}{\underline}
\begin{document}

\title[On the image of a noncommutative polynomial]
{On the image of a noncommutative polynomial}
\author{\v Spela \v Spenko}
\address{Institute of  Mathematics, Physics, and Mechanics,  Ljubljana, Slovenia} \email{spela.spenko@imfm.si}

\begin{abstract}
Let  $F$ be an algebraically closed field of characteristic zero. We consider the question which subsets of $M_n(F)$ can be images of noncommutative polynomials.
We prove that  a noncommutative polynomial $f$ has only finitely many similarity orbits modulo nonzero scalar multiplication in its image if and only if $f$ is power-central. The union of the zero matrix and a standard open set closed under conjugation by $GL_n(F)$ and nonzero scalar multiplication is shown to be  the image of a noncommutative polynomial.  We investigate the density of the images with respect to the Zariski topology. 
 We also answer  Lvov's conjecture for multilinear Lie polynomials of degree at most $4$ affirmatively.
\end{abstract}

\keywords{Noncommutative polynomial,  matrix algebra, trace, similarity orbit, Zariski topology.}
\thanks{2010 {\em Math. Subj. Class.} Primary: 16R30; Secondary: 16K20, 16R99,16S50. }
\maketitle

\section{Introduction}

Let $n$ be a (fixed) integer $\ge 2$, and let $F$ be a field.  We will be concerned with the following problem: 

\smallskip
\centerline{{\em Which subsets of $M_n(F)$ are images of noncommutative polynomials?} }

\smallskip
\noindent
According to \cite{Bel}, this question was ``reputedly raised by Kaplansky". By the image of a  (noncommutative) polynomial $f=f(x_1,\ldots,x_d)$ we mean, of course, the set
 $\im(f) = \{f(a_1,\ldots,a_d)\,|\, a_1,\ldots,a_d\in M_n(F)\}$. An obvious necessary condition for a subset $S$ of $M_n(F)$ to be equal to $\im (f)$ for some $f$ is that $S$ is closed under conjugation  by invertible matrices, i.e., $tSt^{-1}\subseteq S$ for every invertible $t\in M_n(F)$. Chuang \cite{Ch} proved that if $F$ is a finite field, $0\in S$, and if we consider only polynomials with zero constant term, then this condition is also sufficient. This is not true for infinite fields. Say, the set of all square zero matrices cannot be the image of a polynomial \cite[Example, p. 294]{Ch}.

We will consider the case where 
 $F$ is an algebraically closed field of characteristic 0. 
From now on let $M_n$ stand for $M_n(F)$, $M_n^0$ for the space of all trace zero matrices in $M_n$, and $GL_n$ for the group of all invertible matrices in $M_n$.

If $f$ is a polynomial identity, then $\im (f) =\{0\}$. Another important situation where $\im (f)$ is  ``small" is when $f$ is a central polynomial; then $\im (f)$ consists of scalar matrices. What are other possible small images? When considering this question, one has to take into account that if $a\in \im (f)$, then the similarity orbit of $a$ is also contained in $\im (f)$.  The images of many polynomials (for example the homogeneous ones) are also closed under scalar multiplication. Accordingly, let us denote $a^\sim = \{\lambda tat^{-1}\,|\, t\in { GL_n},\lambda\in F\}$.
Is it possible that $\im (f)\subseteq a^\sim$ for some nonscalar matrix $a$? In the $n=2$ case  the answer comes easily: $\im (x_1x_2-x_2x_1)^3 = 
  \left (\begin{array}{cc} 1 & 0 \\ 0 & -1 \end{array} \right)^{\sim}$. One can check this by an easy computation, but the concept behind this example is that the polynomial $(x_1x_2-x_2x_1)^2$ is central, so that 
$\im (x_1x_2-x_2x_1)^3$ can consist only of those trace zero matrices whose determinant is nonzero. Let us also mention that $x_1x_2 - x_2x_1$ also has a relatively small image, namely $M_2^0=\left( \begin{array}{cc} 1 & 0 \\ 0 & -1 \end{array}\right)^\sim\bigcup \left( \begin{array}{cc} 0 & 1 \\ 0 & 0\end{array}\right)^\sim$.
Return now to an arbitrary $n$, and let us make the following definition: A polynomial $f$ is {\em finite on $M_n$} if there exist $a_1,\ldots,a_k\in M_n$ such that $\{0\}\neq\im (f) \subseteq a_1^\sim\bigcup \cdots \bigcup a_k^\sim$.   Next, let $j$ be a positive integer that divides $n$, choose a primitive $j$-th root of unity $\mu_j$,
denote by ${\bf 1}_r$, $r=\frac{n}{j}$, the identity matrix in $M_r$, and finally, denote by ${\bf w}_j$ the 
 diagonal matrix in $M_n$ having ${\bf 1}_r,\mu_j{\bf 1}_r,\ldots,\mu_j^{j-1}{\bf 1}_r$ on the diagonal.
Our first main result is:

\begin{theorem} [Theorem \ref{koncen}, Corollary \ref{pot}] \label{koncenint}
A polynomial $f$ is finite on
 $M_n$ if and only if there exists a positive integer $j$ dividing $n$  such that $f^j$ is central on $M_n$ and  $f^i$ is not central for  $1\le i < j$. In this case $\im (f) \subseteq  {\bf w}_j^\sim\bigcup n_2^\sim\bigcup  \cdots \bigcup n_k^\sim$, where $n_i$ are nilpotent matrices. Moreover, $\im(f^{j+1})\subseteq {\bf w}_j^\sim$.
 \end{theorem}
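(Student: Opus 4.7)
The plan is to work in the generic division algebra $UD_n$ of degree $n$ over its centre $Z$, setting $B:=f(X_1,\ldots,X_d)$ where $X_1,\ldots,X_d$ are generic matrices; the structural picture established for $B$ will then descend by specialization to every $n$-tuple in $M_n$.

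For the ``if'' direction, let $j$ be the minimal exponent with $f^j$ central; then $B^j=c\in Z$, so the minimal polynomial $m_B(x)\in Z[x]$ divides $x^j-c$ and every root of $m_B$ in $\overline Z$ has the form $\alpha\zeta$ with $\alpha^j=c$ and $\zeta\in\mu_j$. Since $\mu_j\subseteq F\subseteq Z$, the field $Z(B)=Z(\alpha)$ already contains every root of $m_B$, so $Z(B)/Z$ is Galois. The map $\sigma\mapsto\sigma(\alpha)/\alpha$ then embeds $\mathrm{Gal}(Z(B)/Z)$ into $F^\times$ as a cyclic subgroup $\mu_k$ with $k=[Z(B):Z]$, so the roots of $m_B$ are precisely $\alpha\mu_k$ and $B^i\in Z$ iff $k\mid i$. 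Minimality of $j$ forces $k=j$, hence $m_B(x)=x^j-c$. Because $UD_n$ is central simple of degree $n$, the reduced characteristic polynomial of $B$ is $\chi_B(x)=m_B(x)^{n/j}=(x^j-c)^{n/j}$, which in particular forces $j\mid n$. Specializing gives $\chi_b(x)=(x^j-c(a))^{n/j}$: when $c(a)\ne 0$, the separable equation $b^j=c(a)I$ makes $b$ diagonalizable with $j$ distinct eigenvalues, each of multiplicity $n/j$, so $b\in{\bf w}_j^\sim$; when $c(a)=0$, $b$ is nilpotent and lies in one of the finitely many nilpotent similarity orbits. Finally, $f^{j+1}(a)=c(a)\cdot b$ is either $0$ or a scalar multiple of $b\in{\bf w}_j^\sim$, so $\im(f^{j+1})\subseteq{\bf w}_j^\sim$.

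For the ``only if'' direction, assume $\{0\}\ne\im(f)\subseteq a_1^\sim\cup\cdots\cup a_k^\sim$. Since the Zariski closure of the image of the irreducible variety $M_n^d$ under $f$ is irreducible, it is contained in some $\overline{a^\sim}$; fix such an $a$. The characteristic polynomials of elements of $\overline{a^\sim}$ form the one-parameter family $\{\chi_{\lambda a}\}_{\lambda\in F}$, so the polynomial map $\chi_B$ factors through this rational curve. Writing $\chi_a(x)=x^r Q(x^j)$ where $j:=\gcd\{k:e_k(a)\ne 0\}$ and $Q$ has nonzero constant term, a direct computation yields $\chi_B(x)=x^r\prod_l(x^j-\mu\gamma_l)^{m_l}$ for some $\mu\in Z$, with $\gamma_l$ (and multiplicities $m_l$) the nonzero roots of $Q$. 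By Cayley--Hamilton, $\chi_B(B)=0$ in the domain $UD_n$, forcing one of the factors $B^r$, $(B^j-\mu\gamma_l)^{m_l}$ to vanish; since $B\ne 0$ rules out $B^r=0$, we obtain $B^j=\mu\gamma_l\in Z$, so $f^j$ is central. Letting $j_0$ be the smallest such exponent, $j_0\mid j$, and the Galois analysis of the first part applied to $j_0$ yields $j_0=\deg m_B$ and $\chi_B=m_B^{n/j_0}$, whence $j_0\mid n$.

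The main obstacle is the Galois step that upgrades the relation $B^j\in Z$ to the equality $m_B(x)=x^j-c$: from divisibility alone, $\deg m_B$ could be any divisor of $j$ and the roots an arbitrary subset of $\alpha\mu_j$, which would only place specializations in ${\bf w}_k^\sim$ for some $k\mid j$, not in ${\bf w}_j^\sim$. Algebraic closedness (making $\mu_j\subseteq F$, so $Z(B)/Z$ is automatically Galois) and characteristic zero (giving separability) are exactly what promote this divisor to $j$ and transmit the generic picture to every non-degenerate specialization.
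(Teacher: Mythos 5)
Your argument is correct, and the overall conclusion matches the paper, but the route is genuinely different at the two technical cruxes. The paper isolates the single relevant similarity orbit by forming the product
$\prod_i\bigl(\sum_k(\tr(f^k)^{j_i}-\alpha_{ik}\tr(f^{j_i})^k)x_k\bigr)$
over \emph{all} representatives $a_1,\dots,a_l$ of nonnilpotent orbits, observes that this product vanishes identically, and then uses that $UD(n)$ is a division ring to kill one factor; you instead observe directly that $\overline{\im(f)}$ is irreducible (being the closure of the image of the irreducible affine space $M_n^d$), hence sits inside a single $\overline{a^\sim}$, which gets you to a one-orbit situation immediately. Both approaches then factor a polynomial relation in $UD(n)$ and exploit the domain property: the paper factors a relation built from $\tr(f),\dots,\tr(f^n)$, you factor $\chi_B(B)=0$ via Cayley--Hamilton after showing $\chi_B(x)=x^r\prod_l(x^j-\mu\gamma_l)^{m_l}$ with $\mu\in Z$. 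Your second point of divergence is in deducing the eigenvalue multiplicities: the paper solves a Vandermonde-type linear system in the multiplicities $k_0,\dots,k_{j-1}$, while you run a Galois-theoretic argument on $Z(B)/Z$, using $\mu_j\subseteq F$ to get normality and then $\deg m_B=j$, $m_B(x)=x^j-c$, $\chi_B=m_B^{n/j}$, which simultaneously gives $j\mid n$ and the equi-multiplicity $n/j$ after specialization. Your version is arguably more conceptual and also explains structurally \emph{why} the ${\bf w}_j$-shape appears. Two small points you should make explicit if writing this up: (i) in the ``only if'' direction, the fixed orbit $a^\sim$ containing $\overline{\im(f)}$ must be nonnilpotent (otherwise $B$ would be nilpotent in the domain $UD(n)$ and hence $0$, contradicting $\im(f)\neq\{0\}$); and (ii) the step ``a direct computation yields $\mu\in Z$'' deserves a sentence: from $e_{jm}(B)=\mu_m c_{jm}\in Z$ with $\mu_{m_1}^{m_2}=\mu_{m_2}^{m_1}$ and $\gcd\{m:c_{jm}\neq0\}=1$, one produces $\mu$ as a Laurent monomial in the (nonzero) elements $\mu_{m_i}\in Z^\times$.
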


It is worth mentioning  that the existence of polynomials whose certain powers
are central is an interesting question that has been studied by several authors (see, e.g., \cite{Alb, AS, Sal}). Yet not everything is  fully understood.

Recall that a subset $U$ of $F^{n^2}$ ($\cong M_n$) is said to be a standard open set (with respect to the Zariski topology) if there exists  $p\in F[z_1,\ldots,z_{n^2}]$ such that $U=\{(u_1,\ldots,u_{n^2})\in F^{n^2}\,|\,p(u_1,\ldots,u_{n^2}) \ne 0\}$. Our second main result is

\begin{theorem}[Theorem \ref{std}]\label{stdint}
If $U$ is a standard open set in $F^{n^2}$ that is closed under nonzero scalar multiplication and  conjugation by invertible matrices, then $U\cup \{0\}=\im (f)$ for some  polynomial $f$.
\end{theorem}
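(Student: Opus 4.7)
The overall plan is to realise $U$ as the nonvanishing locus of a single homogeneous $GL_n$-invariant polynomial $q$, build a noncommutative polynomial whose value is the scalar $q(y)$ (up to a central factor of full scalar image), and multiply by one more variable. Writing $U=\{p\neq 0\}$, the hypersurface $V(p)$ is both $GL_n$-invariant and closed under nonzero scalar multiplication, so its radical $\sqrt{(p)}\subset F[M_n]$ is invariant under $GL_n\times\mathbb{G}_m$. Since this polynomial ring is a UFD, $\sqrt{(p)}$ is principal and its generator $q$ is a semi-invariant for both groups. Evaluating conjugation at $g=\lambda I$ forces every character of $GL_n$ for this action to be trivial, while $\mathbb{G}_m$-characters correspond to homogeneous degrees, so $q$ may be chosen both $GL_n$-invariant and homogeneous. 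The first fundamental theorem for the conjugation action on $M_n$ then gives
\begin{equation*}
q(y)=\tilde q(\tr y,\tr y^2,\ldots,\tr y^n)
\end{equation*}
for some $\tilde q\in F[t_1,\ldots,t_n]$ (if $q$ is a nonzero constant, $U=M_n$ and $f=x_1$ works; otherwise $\deg q\geq 1$ and $q(0)=0$).

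\textbf{Step 2 (Capturing $q$ by a noncommutative polynomial).} The technical heart of the proof is the following fact from the Razmyslov--Procesi--Formanek theory: every pure trace polynomial in one matrix variable is realised, up to multiplication by a fixed central polynomial of full scalar image, by a genuine noncommutative polynomial. Concretely, let $C(z_1,\ldots,z_m)$ be a multilinear central polynomial with $\im(C)=F\cdot I_n$. For each monomial $\tr(y^{i_1})\cdots\tr(y^{i_r})$ of $\tilde q$, the theory produces a noncommutative polynomial taking the value of that monomial times a power of $C(\bar z)\cdot I_n$; after padding with extra factors of $C$ to a common degree and taking the corresponding $\tilde q$-linear combination, one obtains a noncommutative polynomial $P(y,z_1,\ldots,z_m)$ satisfying
\begin{equation*}
P(y,\bar z)=q(y)\,C(\bar z)\,I_n \qquad\text{for all } y\in M_n,\;\bar z\in M_n^m.
\end{equation*}

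\textbf{Step 3 (Assembly).} Set $f(y,z_1,\ldots,z_m)=P(y,z_1,\ldots,z_m)\cdot y$; then $f(a,\bar b)=q(a)C(\bar b)\,a$ for all $a\in M_n$ and $\bar b\in M_n^m$. Since $C(\bar b)$ sweeps out $F$ as $\bar b$ varies, the contribution of $y=a$ to $\im(f)$ is $q(a)\cdot F\cdot a$: if $a\in U$ this equals $F\cdot a=\{0\}\cup F^{*}a\subseteq\{0\}\cup U$ by the invariance of $U$ under nonzero scalars, and choosing $C(\bar b)=q(a)^{-1}$ recovers $a$ itself; if $a\notin U$ it is $\{0\}$. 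Taking the union over $a$ yields $\im(f)=U\cup\{0\}$. The whole argument hinges on Step 2, where the Razmyslov--Formanek realisability of trace polynomials by noncommutative ones (via a central polynomial of full scalar image) does the heavy lifting; Steps 1 and 3 are essentially formal once that input is in place.
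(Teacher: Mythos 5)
Your proposal is correct and follows the same overall strategy as the paper: realise $U$ as the complement (minus $\{0\}$) of a variety cut out by a pure trace polynomial, and then construct the desired noncommutative polynomial by multiplying a suitable central-valued expression by the matrix variable. The difference lies in how the reduction to a pure trace polynomial is effected. The paper isolates this as a standalone result (Theorem \ref{hiper}: if $V(p)$ is conjugation-invariant then $p$ is a pure trace polynomial), and proves it either via Lemma \ref{fi} and the Nullstellensatz with a careful tracking of the $\det(S)$-twist, or by reducing to irreducible $p$, deriving $p_S=\alpha_S p$, and showing $\alpha_S=1$ by a density argument $p(SX)=p(XS)$; homogeneity is then arranged separately inside the proof of Lemma \ref{nicsled} by passing to homogeneous components. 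You instead pass directly to a generator $q$ of $\sqrt{(p)}$ (principal since $F[M_n]$ is a UFD), observe that $q$ is a semi-invariant for $GL_n\times\mathbb{G}_m$, and dispose of both issues at once with a character calculation: evaluating at $g=\lambda I_n$ kills the $\det^k$ character, giving genuine $GL_n$-invariance, while the $\mathbb{G}_m$-character reads off homogeneity. This bundles the paper's Theorem \ref{hiper} and the homogenisation step in Lemma \ref{nicsled} into a single cleaner argument and sidesteps the reduction to irreducible $p$. It also quietly deals with a point the paper's second proof glosses over (why replacing $p$ by its irreducible factors is harmless), since $q$ is squarefree by construction and only $V(q)=V(p)$ matters for the image. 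Steps 2 and 3 mirror the paper's construction in Lemma \ref{nicsled}: one small slip is that after clearing denominators the identity reads $P(y,\bar z)=q(y)\,C(\bar z)^N I_n$ for $N$ the degree of $\tilde q$ (not $C(\bar z)$ to the first power), but since $C^N$ still has full scalar image over the algebraically closed $F$, the assembly in Step 3 goes through unchanged.
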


A simple concrete example of such a set $U$ is $GL_n$. 

 The third topic that we consider is the density of $\im (f)$ (with respect to the Zariski topology of $F^{n^2}\cong M_n$).
Given a  noncommutative polynomial $f=f(x_1,\ldots,x_d)$, we can consider
 $\tr(f)$ as a commutative polynomial in $n^2d$ indeterminates. The density of $\im (f)$ can be characterized as follows.

\begin{proposition}[Proposition \ref{generic}]\label{genericint}
The image of a polynomial $f$ is dense in $M_n$ (resp. $M_n^0$ if $\tr(f)=0$) if and only if the polynomials  $\tr(f),\tr(f^2),\dots,\tr(f^n)$ (resp. $\tr(f^2),\dots,\tr(f^n)$) are algebraically independent.
\end{proposition}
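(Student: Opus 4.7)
The plan is to interpret density of $\im(f)$ through the categorical quotient of $M_n$ under $GL_n$-conjugation, whose coordinate ring is the polynomial algebra freely generated by the algebraically independent traces $\tr(x),\tr(x^2),\ldots,\tr(x^n)$.

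First I would note that $\im(f)$ is the image of the morphism $\phi_f:(M_n)^d\to M_n$, and hence its Zariski closure $Y:=\overline{\im(f)}$ is an irreducible closed subvariety of $M_n$; moreover, since $\im(f)$ is $GL_n$-conjugation invariant, so is $Y$. Let $\pi:M_n\to F^n$, $a\mapsto(\tr(a),\tr(a^2),\ldots,\tr(a^n))$, denote the quotient map, which is surjective (for any prescribed data one exhibits a matrix with those invariants via a companion matrix). The standard dictionary between dominant morphisms and algebraically independent pullbacks of coordinates identifies algebraic independence of $\tr(f),\tr(f^2),\ldots,\tr(f^n)$ with density of the image of $\pi\circ\phi_f$ in $F^n$, which in turn is equivalent to density of $\pi(Y)$ in $F^n$ (since $\im(f)$ is dense in $Y$, so $\overline{\pi(\im(f))}=\overline{\pi(Y)}$).

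The forward direction is then immediate: if $\im(f)$ is dense in $M_n$ then $Y=M_n$, whence $\pi(Y)=F^n$. For the converse I plan to establish the general assertion that any $GL_n$-invariant irreducible closed subvariety $Y\subseteq M_n$ with $\pi(Y)$ dense in $F^n$ must equal $M_n$. The key input is the classical fact that over the complement of the discriminant locus $\Delta\subsetneq F^n$ each fiber of $\pi$ is a single $GL_n$-orbit, namely the conjugacy class of a diagonalizable matrix with simple eigenvalues (of dimension $n^2-n$). Thus for any $c\in\pi(Y)\setminus\Delta$ and any $a\in Y$ with $\pi(a)=c$, invariance yields $\pi^{-1}(c)=GL_n\cdot a\subseteq Y$. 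Consequently $Y\supseteq\pi^{-1}(\pi(Y)\setminus\Delta)$; the right-hand side is the preimage under the surjective $\pi$ of a dense open subset of $F^n$, hence dense in $M_n$, forcing $Y=M_n$.

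For the trace zero version, the hypothesis $\tr(f)=0$ forces $\im(f)\subseteq M_n^0$; the quotient restricts to a surjection $M_n^0\to F^{n-1}$, $a\mapsto(\tr(a^2),\ldots,\tr(a^n))$, whose generic fiber is again a single $GL_n$-orbit of dimension $n^2-n=\dim M_n^0-(n-1)$. The preceding argument then gives the equivalence in this case. The main technical point is verifying the generic-fiber-is-one-orbit assertion, especially in the traceless setting; this reduces to the standard identification of the centralizer of a regular semisimple element with a maximal torus, so it is routine rather than a genuine obstacle.
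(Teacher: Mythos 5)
Your proof is correct and follows essentially the same route as the paper's: reduce density of $\im(f)$ in $M_n$ to density of its image under the invariant-theoretic quotient $\pi:M_n\to F^n$, using $GL_n$-invariance of the closure to show that a proper $GL_n$-stable closed subvariety cannot dominate $F^n$ (the paper proves this, its Lemma \ref{fi}, by slicing with the diagonal matrices, while you invoke that the generic $\pi$-fiber is a single conjugacy class --- two phrasings of the same fact). One small imprecision: $\pi(Y)\setminus\Delta$ is dense but not open, so you should first replace $\pi(Y)$ by a dense open subset $U\subseteq\pi(Y)$ (Chevalley) and then take $\pi^{-1}(U\setminus\Delta)$, a nonempty open and hence dense subset of the irreducible $M_n$, contained in $Y$.
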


Our original motivation for studying the density was the question by Lvov asking whether the image of a multilinear polynomial is a linear space. This was shown to be true for $n= 2$ by Kanel-Belov, Malev and Rowen \cite{Bel}. In general this problem is, to the best of our knowledge, open. If the answer was positive, then either $\im (f)=M_n$ or $\im (f)=M_n^0$ would hold for every  multilinear polynomial $f$ that is neither an identity nor central (see \cite{BK} or \cite{Bel}).  Establishing the density could be an important intermediate step for proving these equalities. On the other hand, as it will be apparent from the next paragraph, this would be sufficient for some applications. 

Motivated by Lvov's problem and Theorems \ref{koncenint} and \ref{genericint}, we have posed ourselves the following two questions concerning a multilinear polynomial $f$.  It has turned out that versions of the first one had already been discussed before (see \cite{Ler,Ro}).

(Q1) If there exists $k\ge 2$ such that $f^k$ is central for $M_n$, $n\ne 2 $,  is then $f$ central?

(Q2) If there exists $k\ge 2$ such that $\tr(f^k)$ vanishes on $M_n$, $n \ne 2$, is then $f$ an identity?

 (Incidentally, the condition that $\tr(f^k)$ vanishes on $M_n$ is equivalent to the condition that $f^k$ is the sum of commutators and an identity \cite{BK}.) 
Note that an affirmative answer to Lvov's question implies that both (Q1) and (Q2) have affirmative answers. Moreover, to establish the latter it would be enough to know only that $\im (f)\cap M_n^0$ is dense in $M_n^0$. Further, since ${\bf w}_j$ has trace zero, one can easily deduce from the last assertion of  Theorem
\ref{koncenint} that an affirmative answer to (Q2) implies an affirmative answer to (Q1). Unfortunately, we were unable to solve any of  these two questions, so we leave them as open problems.  We have only solved the dimension-free version of (Q2):

\begin{proposition}[Proposition \ref{vsotakom}]
If $f $ is a nonzero multilinear polynomial, then $f^k$,  $k\ge 2$, is not a sum of commutators.
\end{proposition}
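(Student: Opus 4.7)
My plan is to show that $f^k \notin [F\langle X\rangle, F\langle X\rangle]$ by exhibiting matrices $A_1, \ldots, A_d$ with $\tr(f^k(A_1, \ldots, A_d)) \neq 0$. By the parenthetical remark cited from \cite{BK}, combined with $\bigcap_n T(M_n) = 0$, such a nonvanishing trace is equivalent to $f^k$ lying outside the commutator subspace of the free algebra. I would induct on $d = \deg f$; the base case $d = 1$ is immediate since $f = \alpha x_1$ yields $\tr(f^k(I_n)) = n\alpha^k \neq 0$.

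For the inductive step, for each $j \in \{1, \ldots, d\}$ consider the specialization $h_j := f|_{x_j = 1}$, a multilinear polynomial of degree $d - 1$ in the remaining $d - 1$ variables. Since $x_j \mapsto 1$ is an algebra homomorphism (in particular sending commutators to commutators) and $f^k|_{x_j = 1} = h_j^k$, whenever some $h_j$ is nonzero the inductive hypothesis applied to $h_j$ forces $h_j^k$---and hence $f^k$---to not be a sum of commutators.

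The residual case is $h_j = 0$ for every $j$; such $f$ are called \emph{proper}. (For $d = 2, 3$ these coincide with the multilinear Lie polynomials, but for $d \geq 4$ the proper subspace is strictly larger, containing for example products like $[x_1, x_2][x_3, x_4]$ on disjoint variables.) My plan for this case is to argue combinatorially via cyclic equivalence classes: $f^k$ lies in the commutator subspace iff for every cyclic equivalence class of length-$kd$ monomials the coefficients of $f^k$ on that class sum to zero. Writing $f = \sum_{\sigma \in T} \alpha_\sigma\, x_{\sigma(1)} \cdots x_{\sigma(d)}$ with support $T \subseteq S_d$, I examine a ``block'' monomial $m_{\sigma_1} \cdots m_{\sigma_k}$ with $(\sigma_1, \ldots, \sigma_k) \in T^k$. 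A short computation shows that its cyclic class in $f^k$ reduces to exactly the $k$ block-rotations (yielding coefficient sum $k\alpha_{\sigma_1} \cdots \alpha_{\sigma_k} \neq 0$) precisely when, for each $s \in \{1, \ldots, d - 1\}$, the prefix-$s$-sets $\{\sigma_i(1), \ldots, \sigma_i(s)\}$ are not all equal.

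The principal obstacle is constructing such a tuple in $T^k$ when $T$ has a restrictive cyclic structure---for instance, when $T$ is a union of full cyclic-shift orbits with constrained prefix-set behavior. The approach is a case analysis on the cyclic-shift orbit structure of $T$: if some orbit meets $T$ in a single element $\sigma$, then the class of $m_\sigma^k$ already yields coefficient sum $\alpha_\sigma^k \neq 0$; otherwise, varying orbit representatives across the $\sigma_i$ supplies the required prefix-set diversity. Carrying out this construction rigorously in every configuration is the delicate combinatorial heart of the argument.
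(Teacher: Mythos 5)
Your approach is genuinely different from the paper's: the paper writes $f=\sum f_i x_1 g_i$, polarises $\tr(f^2)=0$ to deduce that $\sum_{i,j}g_i f_j x_1 g_j f_i=0$ in the free algebra, recognises this as the Razmyslov transform $f^*$ evaluated at $f$, and then concludes $f^*=0$ by a leading-monomial argument and hence $f=0$ by Formanek's result that $f\mapsto f^*$ is injective. You instead propose a direct combinatorial computation of the coefficients of $f^k$ summed over cyclic equivalence classes. That route is legitimate --- the Razmyslov machinery is avoided entirely, and the inductive reduction to proper polynomials you set up first is in fact unnecessary for it (the cyclic-class argument can be run directly on the full support $T$ of $f$).

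The step you flag as ``the delicate combinatorial heart'' is, however, exactly where your write-up stops short of a proof, and it should be named as a gap. That said, the gap is closable with a short construction, so let me sketch why. Write the cyclic-shift action on $S_d$ as $\sigma\mapsto \sigma\circ c^j$ where $c=(1\,2\,\cdots\,d)$; these orbits all have size $d$ since right multiplication is free. Two cases. (i) If $T$ meets some orbit in a single $\sigma$, the cyclic class of $m_\sigma^k$ consists of the $d$ words $(m_{\sigma\circ c^j})^k$, of which only $j=0$ carries a nonzero coefficient, giving class-sum $\alpha_\sigma^k\neq0$ as you observed. (ii) Otherwise $T$ contains distinct $\sigma,\sigma'$ with $\sigma'=\sigma\circ c^j$, $1\le j\le d-1$. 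Then for every $s\in\{1,\dots,d-1\}$ the prefix sets $\{\sigma(1),\dots,\sigma(s)\}$ and $\{\sigma'(1),\dots,\sigma'(s)\}=\{\sigma(1+j),\dots,\sigma(s+j)\}$ differ, because a cyclic arc of length $s<d$ in $\Z/d\Z$ has trivial translation stabiliser (a nontrivial stabiliser would force the arc to contain a full coset, impossible when $s<d$), and $\sigma$ is injective. Taking the tuple $(\sigma,\sigma',\sigma,\dots,\sigma)\in T^k$, your prefix-set criterion is then satisfied for all $s$, the $k$ block-rotations are pairwise distinct (the unique $\sigma'$ sits in $k$ different slots), and the class-sum is $k\,\alpha_\sigma^{k-1}\alpha_{\sigma'}\neq0$ in characteristic zero. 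So your strategy works, but as submitted the combinatorial case analysis is asserted rather than carried out, and without it one cannot rule out the possibility that every cyclic class in $f^k$ has cancelling coefficients (as indeed happens for the naive choice $m_\sigma^k$ when $T$ contains several members of the orbit, e.g.\ $f=[x_1,x_2]$ with $k$ odd, where $\alpha_{(12)}^k+\alpha_{(21)}^k=0$).
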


 In the final part we prove a result giving a small evidence that the answer to Lvov's question may be affirmative. 

\begin{proposition}[Proposition \ref{Lie}]
If $f$ is a nonzero multilinear Lie polynomial of degree at most $4$, then $\im (f)=M_n^0$.
\end{proposition}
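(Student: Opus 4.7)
The inclusion $\im(f)\subseteq M_n^0$ is automatic: the values of any Lie polynomial of degree $\geq 2$ are iterated commutators, hence trace zero. For the reverse inclusion, the plan is to combine closure of $\im(f)$ under $GL_n$-conjugation with the classical fact that every trace-zero matrix is similar to one with zero diagonal. Thus, in each degree $d\in\{2,3,4\}$, it suffices to exhibit a substitution whose image contains every matrix with zero diagonal in the eigenbasis of some fixed diagonal matrix $D=\diag(d_1,\ldots,d_n)$ with pairwise distinct $d_i$.

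For $d=2$, $f$ is a nonzero scalar multiple of $[x_1,x_2]$, and $\im(f)=M_n^0$ is Shoda's theorem. For $d=3$, expand $f=\alpha[[x_1,x_2],x_3]+\beta[[x_1,x_3],x_2]$ in the canonical two-dimensional basis; the substitutions $x_3\mapsto x_2$ and $x_3\mapsto x_1$ produce $(\alpha+\beta)[[x_1,x_2],x_2]$ and $\alpha[[x_1,x_2],x_1]$ respectively, at least one of which is nonzero since $(\alpha,\beta)\neq(0,0)$. Taking the repeated variable to be $D$, the $(i,j)$-entry of $[[x_1,D],D]$ is $(x_1)_{ij}(d_j-d_i)^2$ for $i\neq j$, and every matrix with zero diagonal in the eigenbasis of $D$ is thereby realized.

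For $d=4$, work in the left-normed basis $\{e_\sigma\}_{\sigma\in S_3}$ of the six-dimensional multilinear Lie component, where $e_\sigma=[[[x_1,x_{\sigma(2)}],x_{\sigma(3)}],x_{\sigma(4)}]$, and write $f=\sum_\sigma c_\sigma e_\sigma$. The plan is to run through a sequence of substitutions, each of which produces a nonzero reduction (and hence $\im(f)\supseteq M_n^0$) under a specific linear condition on the coefficients: first, $x_2=x_3=x_4=y$ yields $\bigl(\sum_\sigma c_\sigma\bigr)[[[x_1,y],y],y]$ and handles $\sum_\sigma c_\sigma\neq 0$; next, the three substitutions $x_3=x_4=x_1$, $x_2=x_4=x_1$ and $x_2=x_3=x_1$ yield multiples of $[[[x_1,x_j],x_1],x_1]$ for $j=2,3,4$ with coefficients $c_1+c_2$, $c_3+c_4$ and $c_5+c_6$; when all three such sums vanish, further substitutions $x_4=x_3$, $x_3=x_2$ and $x_4=x_2$ simplified via the Jacobi identity yield multiples of $[[x_1,x_3],[x_2,x_3]]$, $[[x_1,x_2],[x_2,x_4]]$ and $[[x_1,x_2],[x_3,x_2]]$. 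When all these coefficient conditions also fail, $f$ lies in the one-dimensional family spanned by the alternator $f_0=\sum_{\sigma\in S_3}\sgn(\sigma)\,e_\sigma$; a direct Jacobi computation then gives $f_0(x_1,x_2,x_3,x_1)=-2[[x_1,x_2],[x_1,x_3]]$, and setting $x_1=D$ reduces the image to $\{[a,b]:a,b\in M_n \text{ with zero diagonal in the eigenbasis of }D\}$.

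The main obstacle is the verification that $\{[a,b]:a,b\in M_n \text{ with zero diagonal}\}=M_n^0$ for $n\geq 3$, which appears at several points of the degree-$4$ analysis (in each bracket-of-brackets reduction and in the alternator case). This requires a direct but slightly delicate construction: given $c\in M_n^0$, one must exhibit $a,b$ with zero diagonal such that $[a,b]=c$, a manoeuvre that succeeds precisely for $n\geq 3$. For $n=2$ the proposition itself follows from the theorem of Kanel-Belov, Malev and Rowen \cite{Bel}, which settles all multilinear polynomials when $n=2$.
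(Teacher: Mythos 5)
Your strategy is essentially the same as the paper's (degree-by-degree analysis, a sequence of variable identifications that reduce $f$ to a single iterated bracket or to a bracket of two commutators, evaluation at a diagonal matrix with distinct entries, and an appeal to a commutator-of-zero-diagonal-matrices fact for the degree-4 endgame), but there are two noteworthy organizational differences. First, after the three conditions from identifying two of $x_2,x_3,x_4$ with $x_1$, the paper uses Jacobi to rewrite $f$ as $[[x_4,x_3],x_2,x_1]+\alpha_2[[x_4,x_2],x_3,x_1]+\alpha_4[[x_3,x_2],x_4,x_1]$ and then needs only \emph{two} further substitutions ($x_3=x_2$ and $x_4=x_1$), which produce the conditions $1+\alpha_2\neq 0$ and $1-\alpha_2\neq 0$; these cannot both fail, so the alternator never appears as a separate terminal case. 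Your version instead pins the remaining coefficient space down to the one-dimensional span of the alternator and finishes it by a direct Jacobi computation. Both work, and I checked that your five linear conditions $c_1+c_2=c_3+c_4=c_5+c_6=c_1+c_3=c_3+c_5=0$ do cut out exactly $\mathrm{span}(f_0)$ with $f_0=e_1-e_2-e_3+e_4+e_5-e_6$, and that $f_0(x_1,x_2,x_3,x_1)=-2[[x_1,x_2],[x_1,x_3]]$; your third substitution $x_4=x_2$ is redundant, as $c_1-c_5$ vanishes automatically once the other conditions hold.

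The second, more substantive difference is how the rank-one exception to Smith's theorem is treated, and here your proposal has a small gap. The paper does \emph{not} need the full claim that every trace-zero matrix is a commutator of zero-diagonal matrices (false for $n=2$, true for $n\geq 3$); it only cites Smith for matrices of rank $\geq 2$ and covers the rank-one case by the explicit substitution $(1+\alpha_2)e_{12}=f(e_{21},e_{12},e_{12},-\tfrac12 e_{11})$, which lives in $M_2$ and therefore yields a uniform proof for all $n\geq 2$. Your plan instead invokes the $n\geq 3$ strengthening (which you correctly sense is delicate, though it does hold: $e_{12}=[e_{13},e_{32}]$) and falls back on Kanel-Belov--Malev--Rowen for $n=2$. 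That fallback, as written, is incomplete: \cite{Bel} gives $\im(f)\in\{\{0\},F\mathbf{1},M_2^0,M_2\}$, and since $\tr(f)=0$ you land in $\{\{0\},M_2^0\}$, but you still have to rule out $\{0\}$, i.e., to check that no nonzero multilinear Lie polynomial of degree $\leq 4$ is a polynomial identity of $M_2$. This is true (the only multilinear degree-4 identity of $M_2$ up to scalar is $s_4$, which is not a Lie element, as one sees by comparing the coefficients of $x_1x_2x_3x_4$ and $x_4x_3x_2x_1$), but it must be said; alternatively, adopting the paper's direct rank-one substitution removes the $n=2$ case entirely. With that repair your argument is correct.
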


\section{preliminaries}\label{pre}
A polynomial $f(x_1,\dots,x_d)$ in the free associative algebra $F\ax=F\langle x_1, . . . , x_d\rangle$ is called a
polynomial identity of $M_n$ if $f(a_1, . . . , a_d) = 0$ for all $a_1, . . . , a_d \in M_n$;
$f\in F\langle x_1, . . . , x_d\rangle$ is a central polynomial of $M_n$ if 
$f(a_1, . . . , a_d) \in F {\bf1}$ for any $a_1, . . . , a_d\in M_n$ but $f$ is not a polynomial identity of $M_n$. 

By $GM(n)$ we denote the algebra of generic matrices over $F$, which is a domain by Amitsur's theorem \cite[Theorem 3.26]{Row}. $UD(n)$ stands for the generic division ring.
The trace of a matrix  can be expressed as a quotient of two central polynomials and can be therefore viewed as an element of $UD(n)$ (see \cite[Corollary 1.4.13, Exercise 1.4.9]{Row}). Since we will need some properties of this expression we repeat  here the form we need.
\begin{proposition}\label{tr}
There exist a multilinear central polynomial $c_0$ and central polynomials $c_1,\dots,c_n$, such that
\begin{equation*}
\tr(a^i)c_0(x_1,\dots,x_t)=c_i(x_1,\dots,x_t,a)
\end{equation*}
for every $a,x_1,\dots,x_{t}\in M_n$, where $t=2n^2$. 
\end{proposition}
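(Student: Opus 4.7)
The plan is to build $c_0$ from Formanek's multilinear central polynomial and to obtain each $c_i$ via Razmyslov's mechanism for converting trace expressions into pure polynomials on $M_n$. Since the result is classical, the proof would essentially track \cite[Corollary 1.4.13]{Row}; I would describe how the pieces fit.

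First, by the Formanek--Razmyslov existence theorem, $M_n$ admits a multilinear central polynomial $g(y_1,\ldots,y_{n^2})$ of degree $n^2$. Setting
$$c_0(x_1,\ldots,x_{2n^2}) := g(x_1,\ldots,x_{n^2})\,g(x_{n^2+1},\ldots,x_{2n^2})$$
gives a multilinear central polynomial of degree $t=2n^2$. I would then write $c_0 = \tilde{c}_0\,\mathbf{1}$, where $\tilde{c}_0\colon M_n^t\to F$ is multilinear and invariant under simultaneous $GL_n$-conjugation.

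Next, for each $1\le i\le n$, the map $(x_1,\ldots,x_t,a)\mapsto \tr(a^i)\tilde{c}_0(x_1,\ldots,x_t)\mathbf{1}$ takes values in $F\mathbf{1}$, is multilinear in $x_1,\ldots,x_t$, and is $GL_n$-invariant under simultaneous conjugation of all arguments. By Procesi's description of $M_n$-valued polynomial invariants under simultaneous conjugation, every such map is a linear combination of products of traces of monomials in $a,x_1,\ldots,x_t$, times $\mathbf{1}$. I would then invoke Razmyslov's trace-lifting step: multiplying any such product of traces by a multilinear central polynomial of sufficient degree allows one to rewrite it as the scalar part of a pure noncommutative polynomial. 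Summing the lifts over the terms of the Procesi decomposition produces a noncommutative polynomial $c_i(x_1,\ldots,x_t,a)$ satisfying $c_i=\tr(a^i)\,c_0$. This $c_i$ is central because the product of a scalar and a central polynomial is central, and it is not an identity since one can choose $a$ with $\tr(a^i)\ne 0$ and $x_j$'s so that $c_0\ne 0$.

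The main obstacle lies in the Razmyslov trace-lifting step, which converts products of traces into noncommutative polynomials at the cost of an extra factor of a central polynomial. The subtlety is to ensure that one uniform choice of $c_0$ works for all $i=1,\ldots,n$ simultaneously. Taking $c_0$ as the product of two copies of $g$ rather than a single copy provides the necessary slack: one factor of $g$ serves as the carrier that absorbs the lone trace $\tr(a^i)$, while the other factor preserves centrality and nonvanishing of the product. This is the reason the statement takes $t=2n^2$ rather than $t=n^2$.
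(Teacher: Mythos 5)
The paper offers no proof of this proposition at all: it records the form it needs and cites \cite[Corollary 1.4.13, Exercise 1.4.9]{Row}, so there is no in-paper argument to compare against, and relying on the classical literature is in itself legitimate. The problem is that your outline defers to the literature precisely at the one step that carries all the content, and the specific construction you wrap around that step does not match how the classical mechanism works. The ``Razmyslov trace-lifting step'' you invoke --- that multiplying a product of traces by a multilinear central polynomial of sufficient degree turns it into the scalar part of an honest noncommutative polynomial --- is exactly the statement to be proved, and it is not true for an arbitrary central multiplier. The mechanism behind \cite[Corollary 1.4.13]{Row} is the alternation identity: if $c(x_1,\dots,x_{n^2},y_1,\dots,y_{n^2})$ is the Capelli-based central polynomial, \emph{alternating} in $x_1,\dots,x_{n^2}$, then, since any alternating $n^2$-linear map on the $n^2$-dimensional space $M_n$ satisfies $\sum_k\omega(v_1,\dots,Tv_k,\dots,v_{n^2})=\tr(T)\,\omega(v_1,\dots,v_{n^2})$ and left multiplication by $a^i$ has trace $n\tr(a^i)$, one gets
$$\sum_{k=1}^{n^2} c(x_1,\dots,a^i x_k,\dots,x_{n^2},y_1,\dots,y_{n^2}) \;=\; n\,\tr(a^i)\,c(x_1,\dots,x_{n^2},y_1,\dots,y_{n^2}),$$
and division by $n$ (characteristic zero) yields $c_i$. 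This identity fails for multilinear maps that are not alternating in a full set of $n^2$ variables.

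Consequently your choice $c_0=g(x_1,\dots,x_{n^2})\,g(x_{n^2+1},\dots,x_{2n^2})$, with $g$ an arbitrary multilinear central polynomial, is not shown to support the lifting, and your rationale for $t=2n^2$ cannot be right: if a single factor $g(x_1,\dots,x_{n^2})$ could ``absorb'' $\tr(a^i)$, i.e.\ if $\tr(a^i)g(x_1,\dots,x_{n^2})$ were already a noncommutative polynomial in $x_1,\dots,x_{n^2},a$, then the proposition would hold with $t=n^2$ and the second factor would be redundant. The actual source of $2n^2$ is the shape of the Capelli polynomial $C_{2n^2}$: $n^2$ alternating slots plus $n^2$ interlacing slots. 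The appeal to Procesi's theorem also does no work, since $\tr(a^i)\tilde c_0(\bar x)$ is already visibly a pure trace expression; the whole proof reduces to the unestablished lifting claim. To repair the argument, take $c_0$ to be the Capelli-based alternating central polynomial in $2n^2$ variables and define $c_i$ by the displayed alternation identity.
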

By replacing  $a$ by $f(y_1,\dots,y_d)$ we can therefore determine the traces of evaluations of $f$.

It is well-known that the coefficients of the characteristic polynomial can be expressed through the traces as follows:
\begin{proposition}\label{CH0}
There exist $\alpha_{(j_1,\dots,j_n)}\in \Q$ such that the characteristic polynomial can be written as
$$
x^n+\sum_{j=1}^n\sum_{j_1+\dots+j_n=j}\alpha_{(j_1,\dots,j_n)}\tr(x^{j_1})\cdots \tr(x^{j_n})x^{n-j}.
$$
\end{proposition}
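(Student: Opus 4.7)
The plan is to reduce this to the classical Newton identities for symmetric polynomials and then rearrange the output to match the indexing given in the statement.

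First, note that both sides of the asserted identity are polynomial expressions in the entries of $x$: the characteristic polynomial coefficients obviously are, and each $\tr(x^k)$ is a polynomial in the entries of $x$, hence so is any product $\tr(x^{j_1})\cdots\tr(x^{j_n})$. The set of diagonalizable matrices with distinct eigenvalues is Zariski-dense in $M_n$, and both sides are invariant under conjugation by $GL_n$, so it suffices to verify the identity on diagonal matrices $x = \diag(\lambda_1,\ldots,\lambda_n)$. On such matrices,
\begin{equation*}
\chi_x(t) = \prod_{i=1}^n (t-\lambda_i) = t^n + \sum_{j=1}^n (-1)^j e_j(\lambda_1,\ldots,\lambda_n)\, t^{n-j},
\end{equation*}
while $\tr(x^k) = p_k(\lambda_1,\ldots,\lambda_n) := \sum_i \lambda_i^k$. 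Hence the claim reduces to showing that each elementary symmetric polynomial $e_j$ can be written as a $\Q$-linear combination of products $p_{j_1}\cdots p_{j_n}$ with $j_1+\cdots+j_n = j$ (allowing $j_i=0$ with the convention $p_0 = n$, whose scalar value can be absorbed into the rational coefficients).

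The main tool is the Newton recursion
\begin{equation*}
k\, e_k \;=\; \sum_{i=1}^{k}(-1)^{i-1} e_{k-i}\, p_i,\qquad e_0 = 1,
\end{equation*}
which holds in $\Z[\lambda_1,\ldots,\lambda_n]$. Because we are in characteristic zero we may divide by $k$, and by induction on $k$ this exhibits $e_k$ as a polynomial in $p_1,\ldots,p_k$ with coefficients in $\Q$. Since each $p_i$ is homogeneous of degree $i$ in the $\lambda$'s and $e_k$ is homogeneous of degree $k$, every monomial that appears has the form $p_{i_1}\cdots p_{i_r}$ with $i_1+\cdots+i_r = k$ and $r\le k\le n$ (noting $r\le n$ because $e_k = 0$ for $k>n$, so we only need $k\le n$). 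Padding such a monomial with $n-r$ copies of $p_0$ turns the multi-index into an $n$-tuple $(j_1,\ldots,j_n)$ of nonnegative integers summing to $k$; the scalar $p_0 = n$ contributed by each pad is absorbed into the coefficient, yielding rational numbers $\alpha_{(j_1,\ldots,j_n)}$. Specializing $\lambda_i$ to the eigenvalues of $x$ and collecting the sign $(-1)^j$ with these constants produces the stated form.

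There is no real obstacle here beyond the bookkeeping required by the chosen indexing: Newton's identities are classical and the characteristic-zero hypothesis is only needed to invert the factor $k$ in the recursion. The only point worth checking carefully is that the conversion from a partition $(i_1,\ldots,i_r)$ of $k$ to an $n$-tuple $(j_1,\ldots,j_n)$ is compatible with the statement; since the $j_i$ are unordered in the product $\tr(x^{j_1})\cdots\tr(x^{j_n})$, one may simply sum over all such tuples weighted by the appropriate symmetry factor, which is a rational number and is packaged into $\alpha_{(j_1,\ldots,j_n)}$.
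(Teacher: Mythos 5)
Your proof is correct, and it follows exactly the route the paper indicates: the paper states this proposition without proof as a well-known fact and remarks only that the scalars "can be deduced from Newton's formulas," which is precisely your argument (reduction to diagonal matrices, Newton's recursion expressing $e_k$ as a rational polynomial in $p_1,\dots,p_k$, and bookkeeping to match the indexing). Nothing further is needed.
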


A consequence of the above description of the characteristic polynomial is  a well-known fact that a matrix is nilpotent if and only if the trace of each of its powers is zero.

The scalars from Proposition \ref{CH0} can be deduced from Newton's formulas, but we do not need their explicit form.
 However, note that we have a bijective polynomial map from $F^n$ to $F^n$, whose inverse is also a polynomial map, which maps coefficients of the characteristic polynomial of any matrix $x$ into its ``trace" tuple, $(\tr(x),\dots,\tr(x^n))$.  Let us record an easy lemma for  future reference.

\begin{lemma}\label{simsled}
Let $p$ be a symmetric polynomial in $n$ variables. If $f(x)=p(\lambda_1(x),\dots,\lambda_n(x))$, where $\lambda_1(x),\dots,\lambda_n(x)$ are the eigenvalues of a matrix $x\in M_n$, then $f(x)=q(\tr(x),\dots,\tr(x^n))$ for some polynomial $q$.
\end{lemma}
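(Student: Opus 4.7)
The plan is to combine two classical ingredients that are already available in the preliminaries: the fundamental theorem of symmetric polynomials, together with Proposition \ref{CH0}, which expresses the coefficients of the characteristic polynomial of a matrix as polynomials (with rational coefficients) in the traces of its powers. The lemma is essentially the composition of these two statements.

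First I would invoke the fundamental theorem of symmetric polynomials to find a polynomial $P\in F[z_1,\ldots,z_n]$ such that
\[
p(y_1,\ldots,y_n)=P\bigl(e_1(y_1,\ldots,y_n),\ldots,e_n(y_1,\ldots,y_n)\bigr),
\]
where $e_i$ denotes the $i$-th elementary symmetric polynomial. Specializing $y_i$ to $\lambda_i(x)$ and using
\[
\det(tI-x)=\prod_{i=1}^n(t-\lambda_i(x))=t^n+\sum_{j=1}^n(-1)^je_j(\lambda_1(x),\ldots,\lambda_n(x))\,t^{n-j},
\]
we see that each $e_j(\lambda_1(x),\ldots,\lambda_n(x))$ is, up to sign, a coefficient of the characteristic polynomial of $x$. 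Proposition \ref{CH0} then writes each such coefficient as a polynomial, with coefficients in $\Q$, in the quantities $\tr(x),\ldots,\tr(x^n)$; since the field has characteristic $0$, these rational coefficients belong to $F$. Substituting these expressions into $P$ yields a polynomial $q\in F[z_1,\ldots,z_n]$ with $f(x)=q(\tr(x),\ldots,\tr(x^n))$, as required.

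There is essentially no obstacle: the two inputs (the fundamental theorem of symmetric polynomials and Proposition \ref{CH0}) are precisely tailored to the conclusion, and the only point deserving a word of care is the use of characteristic zero, which guarantees that the rational coefficients $\alpha_{(j_1,\ldots,j_n)}$ arising from Newton's identities make sense inside $F$. The reformulation of $e_j(\lambda_1(x),\ldots,\lambda_n(x))$ in terms of the characteristic polynomial of $x$ is immediate from the factorization of $\det(tI-x)$ over the algebraically closed field $F$, so no additional work is needed there.
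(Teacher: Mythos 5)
Your proof is correct and follows essentially the same route as the paper: reduce to the elementary symmetric polynomials via the fundamental theorem of symmetric polynomials, identify $e_j(\lambda_1(x),\dots,\lambda_n(x))$ with the coefficients of the characteristic polynomial (Vieta), and then apply Proposition \ref{CH0}. No gaps.
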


\begin{proof}
Since $p$ is a symmetric polynomial, it can be expressed as a polynomial in the elementary symmetric polynomials  $e_1,\dots,e_n$  by the fundamental theorem of symmetric polynomials. Thus, 
$$f(x)=\tilde{p}\Big (e_1\big(\lambda_1(x),\dots,\lambda_n(x)\big),\dots,e_n\big(\lambda_1(x),\dots,\lambda_n(x)\big)\Big).$$
 Therefore it suffices to prove that $e_i(\lambda_1(x),\dots,\lambda_n(x))=q(\tr(x),\dots,\tr(x^n))$ for some polynomial $q$. Since $\lambda_1(x),\dots,\lambda_n(x)$ are the eigenvalues of a matrix $x$, they are the zeros of the characteristic polynomial of $x$, hence by Vieta's formulas $e_i(\lambda_1(x),\dots,\lambda_n(x))$ equals the coefficient at $x^{n-i}$ in the characteristic polynomial of $x$. The assertion of the lemma follows for every coefficient can be expressed as a polynomial in the traces of powers of $x$ by Proposition \ref{CH0}.
\end{proof}

\section{finite polynomials}\label{pog3}
In this section we want to find the ``smallest possible" images  of polynomials evaluated on $M_n$. If we want a set $S\subseteq M_n$ to be the image of a polynomial we have to require that it is closed under conjugation by invertible matrices. Hence, one possible criterion for the smallness of the image would be the number of similarity orbits contained in it. Therefore one may be inclined to study polynomials that have just a finite number of similarity orbits in their image. However, the images of many polynomials (for example homogeneous as $F$ is algebraically closed) are closed under scalar multiplication,
therefore we take only the  orbits modulo scalar multiplication (by nonzero scalars)  into account. (In this section the expression ``modulo scalar multiplication" will always mean modulo scalar multiplication by nonzero scalars.) In this way we arrive at the definition of a \textit{finite polynomial}, as given in the introduction. 
Central polynomials of $M_n$ have only one nonzero similarity orbit in their image modulo scalar multiplication and we will see that finite polynomials are in close relation with them.

\begin{lemma}\label{powerc}
If $f^j$ is a central polynomial of $M_n$ for some $j\geq 1$, then $f$ is finite on $M_n$.
\end{lemma}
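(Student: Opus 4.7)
The plan is to show that every value $a=f(a_1,\ldots,a_d)$ lies in one of finitely many $GL_n$-similarity orbits (modulo scalar multiplication), by exploiting the fact that centrality of $f^j$ forces $a^j$ to be a scalar matrix. The argument naturally splits according to whether this scalar vanishes.

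First I would fix an evaluation and write $a^j=\lambda\mathbf{1}$ for some $\lambda\in F$ depending on the evaluation (possible since $f^j$ is central). If $\lambda=0$, then $a$ is nilpotent, and the nilpotent similarity classes in $M_n$ are indexed by partitions of $n$, so only finitely many orbits arise from this case.

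If $\lambda\neq 0$, then the polynomial $x^j-\lambda\in F[x]$ annihilates $a$. Since $\mathrm{char}(F)=0$ and $\lambda\neq 0$, this polynomial is separable, so $a$ is diagonalizable. Fixing a $j$-th root $\nu$ of $\lambda$ and a primitive $j$-th root of unity $\zeta$, the matrix $\nu^{-1}a$ is similar to a diagonal matrix with entries in the finite set $\{1,\zeta,\ldots,\zeta^{j-1}\}$. There are only $\binom{n+j-1}{n}$ such diagonal matrices up to permutation, hence only finitely many similarity classes arise, and the prefactor $\nu$ is absorbed into the scalar part of the orbit $a^\sim$.

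Finally, since $f^j$ is central it is in particular not a polynomial identity, so some evaluation of $f$ is nonzero, giving $\im(f)\neq\{0\}$. Collecting a representative from each of the (finitely many) nilpotent Jordan types and each multiset of size $n$ of $j$-th roots of unity then yields matrices $a_1,\ldots,a_k$ with $\{0\}\neq\im(f)\subseteq a_1^\sim\cup\cdots\cup a_k^\sim$. The only mildly delicate point is the diagonalizability when $\lambda\neq 0$, which genuinely uses $\mathrm{char}(F)=0$; the nilpotent case and the counting of multisets are routine.
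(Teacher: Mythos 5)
Your proof is correct and follows essentially the same route as the paper: split the values of $f$ according to whether $a^j=\lambda\mathbf{1}$ has $\lambda=0$ (nilpotent, finitely many Jordan types) or $\lambda\neq 0$ (diagonalizable with eigenvalues among $j$-th roots of unity up to a scalar, finitely many multisets). You spell out the diagonalizability via separability of $x^j-\lambda$ and explicitly observe $\im(f)\neq\{0\}$, both of which the paper leaves implicit, but there is no substantive difference in approach.
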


\begin{proof}
If $b=f(\ul{a})$ for some $\ul{a}\in M_n^d$, then the Jordan form of $b$ is either diagonal or nilpotent. If it is diagonal, then it is a scalar multiple of a  matrix having  $j$-th roots of unity on the diagonal. 
There are only finitely many such matrices modulo scalar multiplication. Also the number of similarity orbits of nilpotent matrices modulo scalar multiplication is finite. Thus, $f$ is finite.
\end{proof}

The goal of this section is to prove the converse of this simple observation.

Let us introduce a family of matrices that plays an important role in the next theorem. For every $j$ dividing $n$ choose a primitive $j$-th root of unity $\mu_j$ and define the matrix
$$
{\bf w}_j=
\begin{pmatrix}
{ \bf 1}_r& & & \\
& \mu_j {\bf 1}_r  &  &  \\
& & \ddots & \\
&  & &\mu_j^{j-1}{\bf 1}_r\\
\end{pmatrix},
$$
where $r=\frac{n}{j}$ and ${\bf 1}_r$ denotes the $r\times r$ identity matrix.

A polynomial $f$ is said to be \textit{$j$-central} on $M_n$ if $f^j$ is a central polynomial, while smaller powers of $f$ are not central. We call a polynomial \textit{power-central} if it is $j$-central for some  $j>1$.

\begin{theorem}\label{koncen}
A polynomial $f$ is  finite on $M_n$ if and only if there exists $j\in \mathbb {N}$ such that $f$ is $j$-central on $M_n$. Moreover, in this case every nonnilpotent matrix in $\im(f)$ is similar to a scalar multiple of ${\bf w}_j$.
\end{theorem}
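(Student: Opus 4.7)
The reverse direction is Lemma \ref{powerc}; my plan is for the harder direction, that finite implies $j$-central. Finiteness demands $\im(f)\ne\{0\}$, so $f$ is not a polynomial identity, and $f(\ul X)$ is a nonzero (hence invertible) element of $UD(n)$. Let $Z$ be its center and set $j=[Z[f(\ul X)]:Z]$; since $UD(n)$ is central simple of $Z$-dimension $n^2$, the commutative subfield $Z[f(\ul X)]$ has degree $j$ dividing $n$. The whole proof reduces to showing that the minimal polynomial of $f(\ul X)$ over $Z$ has the form $t^j-c$ for some $c\in Z$: this at once yields $f(\ul X)^j=c\in Z$, i.e.\ $f^j$ is central on $M_n$, and the minimality of $j$ is automatic since $f(\ul X)^i$ for $0<i<j$ fails to be scalar in the splitting field.

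The key step is to extract from finiteness the statement that all ratios $\la_i/\la_k$ of eigenvalues of $f(\ul X)$ in $\bar Z$ lie in $F$ (they are all nonzero since $\det f(\ul X)\in Z^*$ by invertibility). Consider the coefficient map $\Phi\colon \ul a\mapsto (c_1(f(\ul a)),\dots,c_n(f(\ul a)))$ sending $M_n^d$ to $F^n$, where $c_i$ denote the coefficients of the characteristic polynomial. Its image lies in the finite union $\bigcup_i C_{b_i}$, where $C_{b_i}$ is the irreducible curve parametrising characteristic coefficients of scalar multiples of $b_i$. Since $M_n^d$ is irreducible and (as $f$ is not a PI) the generic orbit cannot be nilpotent, the closure of $\im\Phi$ lies in a single curve $C_{b_1}$ with $b_1$ non-nilpotent. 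Concretely, for generic $\ul a$ the eigenvalues of $f(\ul a)$ are $\la_{\ul a}\mu_i$ for fixed nonzero $\mu_i$'s (the eigenvalues of $b_1$). Form $\rho(s)=\prod_{i\ne k}(s\la_k-\la_i)\in Z[s]$, symmetric in the $\la$'s; its leading coefficient in $s$ is $\det f(\ul X)^{n-1}\in Z^*$, so $\tilde\rho(s):=\rho(s)/\det f(\ul X)^{n-1}\in Z[s]$ is monic with roots $\la_i/\la_k$. It specializes at every generic $\ul a$ to the fixed polynomial $\prod_{i\ne k}(s-\mu_i/\mu_k)\in F[s]$. Since an element of $Z$ that takes a constant value on a dense open set of $M_n^d$ is itself a constant, $\tilde\rho\in F[s]$ and hence $\la_i/\la_k\in\bar F=F$.

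Write $\la_i=\nu_i\la_1$ with $\nu_i\in F^*$ and $\nu_1=1$. The Galois group $\mathrm{Gal}(\bar Z/Z)$ acts transitively on the $j$ distinct eigenvalues (the minimal polynomial being irreducible of degree $j$) and fixes $F$ pointwise. Taking $\sigma$ with $\sigma(\la_1)=\la_k$ and applying it to $\la_i=\nu_i\la_1$ gives $\la_{\pi(i)}=\nu_i\la_k$, whence $\nu_{\pi(i)}=\nu_i\nu_k$. Thus $N=\{\nu_1,\dots,\nu_j\}\subset F^*$ is closed under multiplication; being a finite subset of $F^*$ in characteristic zero, it is cyclic of order $j$, so $N=\{1,\zeta,\dots,\zeta^{j-1}\}$ with $\zeta$ a primitive $j$-th root of unity. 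Therefore the minimal polynomial is $\prod_i(t-\zeta^i\la_1)=t^j-\la_1^j$ with $\la_1^j\in Z$, so $f^j$ is central.

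For the moreover claim, $f(\ul a)^j=c(\ul a)I$ at every $\ul a$ (specializing $\la_1^j\in Z$). If $c(\ul a)=0$ then $f(\ul a)$ is nilpotent. Otherwise $t^j-c(\ul a)$ annihilates $f(\ul a)$ and has distinct roots, so $f(\ul a)$ is diagonalizable, and its characteristic polynomial — the specialization of $(t^j-\la_1^j)^{n/j}$ — equals $(t^j-c(\ul a))^{n/j}$, making each $j$-th root of $c(\ul a)$ an eigenvalue of multiplicity exactly $n/j$. Hence $f(\ul a)\sim c(\ul a)^{1/j}{\bf w}_j$. The main obstacle in this plan is the Galois step: converting the geometric finiteness of $\im(f)$ modulo scalars into the algebraic rigidity $\la_i/\la_k\in F$, from which the group-closure argument forces the root-of-unity structure.
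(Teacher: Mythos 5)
Your proof is correct and takes a genuinely different route from the paper's. The paper stays entirely within polynomial-identity manipulations: for each orbit representative $a_i$ it writes down a trace identity of the form $\sum_k(\tr(x^k)^{j_i}-\alpha_{ik}\tr(x^{j_i})^k)x_k$ that vanishes whenever $x$ is a scalar multiple of $a_i$ or nilpotent, multiplies these over $i$ to get a vanishing product in $UD(n)$, and then uses that $UD(n)$ is a division ring to isolate a single vanishing factor; centrality of a power of $f$ then comes from factoring the characteristic polynomial of $f$ over $UD(n)$ into linear factors and again invoking the absence of zero divisors. Your argument instead exploits $UD(n)$ as a central division algebra over $Z$ and the subfield $Z[f(\ul X)]$: irreducibility of $M_n^d$ combined with the finiteness hypothesis converts the geometric constraint on $\im\Phi$ into the algebraic rigidity $\la_i/\la_k\in F$, and the Galois/group-closure argument then forces the ratio set to be a finite subgroup of $F^\times$, i.e.\ the group of $j$-th roots of unity, yielding the minimal polynomial $t^j-\la_1^j$ and $f^j$ central. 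This is conceptually illuminating---it makes transparent why roots of unity must appear---and it delivers the ``moreover'' clause (each nonnilpotent value similar to a scalar multiple of ${\bf w}_j$) directly from the factorization of the reduced characteristic polynomial $(t^j-\la_1^j)^{n/j}$, with no separate Vandermonde computation. Two technical points should be surfaced, though neither is a genuine gap: (i) the passage from ``$\im\Phi\subseteq\bigcup_i C_{b_i}$'' to ``for generic $\ul a$ the eigenvalues of $f(\ul a)$ are $\la_{\ul a}\mu_i$'' must distinguish $C_{b_1}$ from its Zariski closure. The cleanest fix is to observe that $S_i=\{\ul a\,:\,f(\ul a)\in b_i^\sim\}$ is constructible (image of a morphism $GL_n\times F\times M_n^d\to M_n^d$), that the $\overline{S_i}$ together with the nilpotent locus cover the irreducible $M_n^d$, and hence some $S_i$ with $b_i$ nonnilpotent contains a dense open set; you should also restrict to the open set on which the denominators of the rational functions in $Z$ appearing as coefficients of $\tilde\rho\in Z[s]$ do not vanish. (ii) You implicitly use that the reduced characteristic polynomial of an element of a division algebra is a power of its minimal polynomial---this is standard but deserves a citation. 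Both are routine repairs; the proof as a whole is sound and genuinely alternative.
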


\begin{proof}
Let $a_1,\dots,a_l$ be the representatives of distinct nonnilpotent similarity orbits of $\im(f)$ on $M_n$ modulo scalar multiplication.
For each $a_i$ we set $j_i=\min\{j\;|\tr(a_i^j)\neq 0\}$ (such $j$ exists since $a_i$ is not nilpotent)
and let $\alpha_{ik}=\frac{\tr(a_i^k)^{j_i}}{\tr(a_i^{j_i})^k}$ for $1\leq k\leq n;$ 
these scalars carry the information about the coefficients of the characteristic polynomial of $a_i$. Note that $\alpha_{ik}=0$ for $1\le k\le j_{i}-1$. 
The trace polynomial
$$
\sum_{k=1}^n(\tr(x^k)^{j_i}-\alpha_{ik}\tr(x^{j_i})^k)x_k
$$
vanishes if we substitute a scalar multiple of $a_i$ for $x$ and arbitrary $b_1,\dots,b_n\in M_n$ for $x_1,\dots,x_n$.
Since every nonnilpotent matrix in $\im(f)$ is similar to a scalar multiple of $a_i$ for some $i$ and the trace of powers of nilpotent matrices is zero, the following identity holds in $UD(n)$ (according to Proposition \ref{tr}, all $\tr(f^k)$ lie in $UD(n)$):
$$
\prod_{i=1}^l\Big( \sum_{k=1}^n(\tr(f^k)^{j_i}-\alpha_{ik}\tr(f^{j_i})^k)x_k\Big)=0.
$$
Since $UD(n)$ is a division ring,
one of the factors in the product equals zero in $UD(n)$. 
Hence there exists $i$ such that $\sum_{k=1}^n(\tr(f^k)^{j_i}-\alpha_{ik}\tr(f^{j_i})^k)x_k=0$ and so $\tr(f^k)^{j_i}-\alpha_{ik}\tr(f^{j_i})^k=0$ in $UD(n)$ for every $1\le k\le n$. For simplicity of notation we write $j, \alpha_k$ instead of $j_i,\alpha_{ik}$, respectively, for $1\leq k\leq n$. We will first consider the case when $\alpha_1\neq 0$, i.e., $j=1$ and $\tr(f)\neq 0$. Then the characteristic polynomial of $f$ can be expressed as
\begin{equation}\label{CH}
f^n+\sum_{j=1}^n\beta_j\tr(f)^jf^{n-j}=0 
\end{equation}
 for some $\beta_1,\dots,\beta_n\in F$ (see Proposition \ref{CH0}). 
%Since $F$ is infinite this identity holds also in $UD(n, \overline{F})$, the universal division ring over the algebraic closure $\overline{F}$ of $F$. 
Let $\lambda_1,\dots,\lambda_n\in {F}$ be zeros of the polynomial $x^n+\sum_{j=1}^n\beta_j x^{n-j}$. Then we can factorize (\ref{CH}) in $UD(n)$ as
$$
\prod_{k=1}^n(f-\lambda_k\tr(f))=0.
$$
This is an identity in $UD(n)$, hence $f-\lambda_k\tr(f)=0$ for some $1\le k\le n$, implying that $f$ is a central polynomial.
Now we consider the general case. We have $\alpha_j\neq 0$ for some $1\leq j\leq n$ and $\alpha_k=0$ for $1\leq k\leq j-1$. Then $\tr(f^j)\neq 0$ and $f^j$ is also finite, so we can just repeat the first part of the proof for $f^j$ from which it  follows that $f^j$ is a central polynomial. In this case $\tr(f^k)=0$ for all $1\le k <j$, therefore $f^k$ is not central.

So far we have proved that $f$ is $j$-central for some $j\ge 1$ and $\tr(f^k)=0$ for all $1\le k <j$. It remains to prove that  nonnilpotent matrices in $\im(f)$ are similar to a scalar multiple of ${\bf w}_j$.
The values of $f$ on $M_n$ can be nilpotent matrices and matrices for which the Jordan form has (modulo scalar multiplication) just powers of the  primitive $j$-th root $\mu_j$ of unity on the diagonal. For simplicity of notation we write $\mu$ instead of $\mu_j$. Take a nonnilpotent matrix $a\in \im(f)$. 
 We are reduced to proving that  the eigenvalues of $a$ are equal to $\lambda,\lambda\mu,\dots,\lambda\mu^{j-1 }$ for some $0\neq\lambda\in {F}$ (depending on $a$) and all have the same algebraic multiplicity $\frac{n}{j}$.
Recall that $\tr(f^k)=0$ for $k<j$.
Hence, if $k_i$ is the multiplicity of $\mu^i$ in the characteristic polynomial of $a\in \im(f)$, then we have 
$$
\begin{array}{*{3}{c@{\:+\:}}c@{\;=\;}c}
k_0 & k_1\mu & \dots & k_{j-1}\mu^{j-1} & 0\\
k_0 & k_1\mu^2 & \dots & k_{j-1}(\mu^{j-1})^2 & 0\\
\multicolumn{5}{c}{\dotfill}\\
k_0 & k_1 \mu^{j-1} & \dots & k_{j-1}(\mu^{j-1})^{j-1} & 0. 
\end{array}
$$
The above equations can be rewritten as $\sum_{i=1}^{j-1}k_i(\mu^t)^i=-k_0
$, $1\leq t\leq j-1$. Having fixed $k_0$, the system of equations in variables $k_1,\dots,k_{j-1}$ will have a unique solution if and only if the determinant of $((\mu^t)^i)$, $1\leq t,i\leq j-1$, is different from zero. 
Since $\mu^t$, $1\leq t\leq j-1$, are distinct, the Vandermonde  argument shows that it is nonzero indeed. Thus, $k_i=k_0$ for every $1\leq i\leq j-1$ is the unique solution. Hence, every nonnilpotent matrix in $\im(f)$ is similar to a scalar multiple of the matrix ${\bf w}_j$.

The converse follows from Lemma \ref{powerc}.
\end{proof}

\begin{corollary}\label{pot}
If $f$ is $j$-central on $M_n$ for some $j\in \N$, then $\im(f^m)$ for $m\geq j$ consists of scalar multiples of exactly one similarity orbit generated by ${\bf w}_j^m$.
\end{corollary}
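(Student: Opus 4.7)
The plan is to deduce the corollary directly from the ``moreover'' clause of Theorem~\ref{koncen}, which already pins down the possible similarity orbits in $\im(f)$: every $b\in\im(f)$ is either nilpotent or similar to a nonzero scalar multiple of ${\bf w}_j$. I would then split the analysis of $\im(f^m)$ along these two cases.

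For the nilpotent part, take $b=f(a_1,\ldots,a_d)\in\im(f)$ with $b$ nilpotent. Since $f^j$ is central, $b^j=f^j(a_1,\ldots,a_d)$ is a scalar matrix; but it is also nilpotent, so it must be $0$. Hence $b^m=0$ for every $m\ge j$, and the nilpotent part contributes only $0$ to $\im(f^m)$.

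For the nonnilpotent part, every such $b\in\im(f)$ has the form $b=\alpha\, g{\bf w}_j g^{-1}$ with $\alpha\in F^{\times}$ and $g\in GL_n$, so
$$b^m=\alpha^m g{\bf w}_j^m g^{-1}$$
is a nonzero scalar multiple of a conjugate of ${\bf w}_j^m$. Combining the two cases gives $\im(f^m)\subseteq F\cdot ({\bf w}_j^m)^{\sim}$. To confirm that this orbit is actually attained (and not ``degenerately empty''), I would use that $j$-centrality requires $f^j$ to be a central polynomial, hence by definition not an identity; so some tuple $(a_1,\ldots,a_d)$ produces $f^j(a_1,\ldots,a_d)\ne 0$, and for that tuple $f(a_1,\ldots,a_d)$ is nonnilpotent, placing a nonzero scalar multiple of ${\bf w}_j^m$ inside $\im(f^m)$. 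Thus $\im(f^m)$ consists exactly of the scalar multiples of the single similarity orbit of ${\bf w}_j^m$.

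I do not foresee any serious obstacle, since Theorem~\ref{koncen} does almost all the work. The only point that needs to be handled with a little care, and which is really the whole content of the corollary, is the implication ``$b\in\im(f)$ nilpotent $\Rightarrow b^j=0$''; this step is what forces the nilpotent orbits to disappear after taking the $m$-th power and so reduces $\im(f^m)$ to a single similarity class modulo scalars.
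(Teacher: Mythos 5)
Your proposal is correct and follows essentially the same route as the paper: both rest on the ``moreover'' clause of Theorem~\ref{koncen} for the nonnilpotent case, and both kill the nilpotent case by noting that $f^j$ central forces a nilpotent value $b\in\im(f)$ to satisfy $b^j=0$. The only (harmless) addition on your side is the explicit sanity check that the orbit of ${\bf w}_j^m$ is actually realized, which the paper leaves implicit since a central polynomial is by definition not an identity.
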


\begin{proof}
 Since every nonnilpotent matrix in $\im(f)$ is similar to a scalar multiple of ${\bf w}_j$, its $m$-th power is similar to a scalar multiple of ${\bf w}_j^m$. If $f(\ul{a})^m$, $m\geq j$, is  nilpotent, so is $f(\ul{a})^j$. In this case $f(\ul{a})^j=0$ due to the centrality of $f^j$. Hence, $\im(f^m)$, $m\geq j$, does not contain nonzero nilpotent matrices.
\end{proof}

Power-central polynomials are important in the structure theory of division algebras. The question whether $M_p(\Q)$ has a power-central polynomial for a prime $p$ is equivalent to the long-standing open  question whether division algebras of degree $p$ are cyclic. This is known to be true for $p\leq 3$. An example of $2$-central polynomial on $M_2(K)$ for an arbitrary field $K$ is $[x,y]$, which is also multilinear. 
The truth of Lvov's conjecture would imply that there are no multilinear power-central polynomials on $M_n(K)$ for $n\ge 3$. While it is easy to see that multilinear $j$-central polynomials for $j>2$ do not exist over $\Q$ (see, e.g., \cite{Ler}), the same question over an algebraically closed field $F$ remains open.

\begin{remark}
If $f$ is $j$-central, then $\tr(f^2)=0$ if $j>2$, and $\tr(f^3)=0$ if $j=2$. 
Thus, if for multilinear polynomials $f,g$,  the identity $\tr(f^2)=0$ implies $f=0$  (in UD(n)) and the identity $\tr(g^3)=0$   implies $g=0$ (in UD(n)), 
then it would follow that there do not exist multilinear noncentral power-central polynomials.  (See also Section \ref{tr^2}.)
\end{remark}

\section{standard open sets as images of  polynomials}
We will show that if $U$ is a Zariski open subset of $F^{n^2}$, defined as the nonvanishing set of a polynomial in $F[x_{11},\dots,x_{nn}]$ satisfying some natural conditions, then there exists a polynomial $f$ such that $\im(f)=U\cup \{0\}$.
We will first prove that this is true for the most prominent example of such a  set, $GL_n$. 
We follow the standard notation and 
 denote by $V(p)$ the set of zeros of a polynomial $p$, $V(p)=\{(u_1,\dots,u_k)\in F^k|\; p(u_1,\dots,u_k)=0\}$, and by $D(p)=\{(u_1,\dots,u_k)\in F^k|\; p(u_1,\dots,u_k)\neq 0\}$ the complement of $V(p)$. For a subset $V$ of $F^k$ we define $I(V)$ to be the ideal of all polynomials vanishing on $V$, $I(V)=\{p\in F[z_1,\dots,z_k]\,|\; p(u)=0 \text{ for all } u\in V\}$. In this section we will use some basic facts from algebraic geometry which can be found in any standard textbook.

\begin{proposition}
There exists a noncommutative polynomial $f$ such that $\im(f)=GL_n\cup \{0\}$ on $M_n$.
\end{proposition}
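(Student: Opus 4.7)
The plan is to exhibit a polynomial of the form $f(\bar y,x)=E(\bar y,x)\,x$ whose scalar value on $(\bar y,x)$ satisfies $E(\bar y,x)=c_0(\bar y)^n\det(x)\cdot{\bf 1}$, where $c_0$ is the multilinear central polynomial from Proposition~\ref{tr}. Since $f(\bar y,x)=c_0(\bar y)^n\det(x)\,x$ is always a scalar multiple of $x$, the scalar vanishes exactly when $c_0(\bar y)=0$ or $x$ is singular and is a unit otherwise; this forces $\im(f)\subseteq GL_n\cup\{0\}$, and the freedom to choose $\bar y$ will be enough to hit every invertible matrix.

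The main construction step is to realize $c_0(\bar y)^n\det(x)$ as the output of a noncommutative polynomial. By Proposition~\ref{CH0}, $\det(x)$ equals (up to sign) the constant term of the characteristic polynomial of $x$, and can thus be written as a $\Q$-polynomial $P(\tr(x),\tr(x^2),\ldots,\tr(x^n))$ of total degree at most $n$. Proposition~\ref{tr} gives $c_0(\bar y)\tr(x^i)=c_i(\bar y,x)$ for $i\ge 1$, so every monomial $\tr(x^{j_1})\cdots\tr(x^{j_k})$ in $P$ (with $k\le n$) becomes a product $c_{j_1}(\bar y,x)\cdots c_{j_k}(\bar y,x)$ after multiplication by $c_0(\bar y)^k$. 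Multiplying the full expression $\det(x)=P(\ldots)$ by $c_0(\bar y)^n$ therefore yields a polynomial $E(\bar y,x)\in F\langle\bar y,x\rangle$ whose evaluation on $M_n$ is the scalar matrix $c_0(\bar y)^n\det(x)\cdot{\bf 1}$, and I set $f(\bar y,x):=E(\bar y,x)\,x$.

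For the image computation, if $c_0(\bar y)=0$ or $\det(x)=0$ then $f(\bar y,x)=0$, while otherwise $x\in GL_n$ and $f(\bar y,x)$ is a nonzero scalar multiple of $x$, hence invertible; so $\im(f)\subseteq GL_n\cup\{0\}$. Conversely, for any $y\in GL_n$ I set $x=y$ and choose $\bar y$ with $c_0(\bar y)^n=\det(y)^{-1}$; such $\bar y$ exists because $c_0$ is a nonzero multilinear central polynomial, so its image is closed under scalar multiplication and nontrivial, hence equal to all of $F\cdot{\bf 1}$, and $F$ is algebraically closed, so every element of $F$ arises as some $c_0(\bar y)^n$.

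The only step that demands any care is the construction of $E$: one must verify that $n$ really is a high enough exponent on $c_0$ to clear every occurrence of $\tr(x^i)$ in the expansion of $\det(x)$. This is immediate from the explicit form in Proposition~\ref{CH0}, where each monomial contributing to the constant term already carries exactly $n$ trace factors (with the convention $\tr(x^0)=n$ absorbed into the coefficient), so $c_0^n$ pairs off monomial by monomial. Beyond this routine bookkeeping, I foresee no serious obstacle.
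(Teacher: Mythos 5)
Your proposal is correct and takes essentially the same approach as the paper: both define $f=c(\bar y,x)\,x$ where $c$ is a central polynomial realizing $c_0(\bar y)^n\det(x)$, and immediately read off that $\im(f)\subseteq GL_n\cup\{0\}$. The only minor stylistic difference is in surjectivity: you reach an arbitrary $y\in GL_n$ by directly choosing $\bar y$ with $c_0(\bar y)^n=\det(y)^{-1}$ (using multilinearity of $c_0$ and algebraic closedness of $F$), whereas the paper observes that $c$ is homogeneous in one variable and thus $\im(f)$ is closed under scaling, so hitting a nonzero multiple of each $y\in GL_n$ suffices.
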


\begin{proof}
As $\det(x)$ is a polynomial in the traces of powers of $x$ it can be expressed as the quotient of two central polynomials due to Proposition \ref{tr}. 
We can write $\det(x)=\frac{c(x_1,\dots,x_t,x)}{c_0(x_1,\dots,x_t)^n}$, where $c,c_0$ are central polynomials, $c_0$ is multilinear and $t=2n^2$. Note that if we choose $a_1,\dots,a_{t}$  such that $c_0(a_1,\dots,a_{t})\neq 0$ 
then $\det(x)=0$ if and only if $c(a_1,\dots,a_t,x)\neq 0$. Define $f=c(x_1,\dots,x_t,x)x$.
As $c_0$ is multilinear, $c$ is  homogeneous in the first variable. Therefore $a\in \im(f)$ forces $F a\subseteq \im(f)$ because $F$ is algebraically closed. Hence, the image of $f$ consists of all invertible matrices and the zero matrix.
\end{proof}

In this section we will consider (commutative) polynomials and polynomial maps on $F^{n^2}$. Since these maps will be often evaluated on $n\times n$ matrices we denote the variables by $x_{11},\dots,x_{nn}$.
Let $X$ denote the matrix corresponding to the $n^2$-tuple $(x_{11},\dots,x_{nn})$. By a slight abuse of notation we will sometimes regard a polynomial map $p:F^{n^2}\to F^k$ as a map from $M_n$ to $F^k$. For example, 
$p(x_{11},\dots,x_{nn})=x_{11}+x_{22}+\dots+x_{nn}$ can be seen as a map from $M_n$ to $F$, assigning to every matrix in $M_n$ its trace. In this case we write $p(x_{11},\dots,x_{nn})=\tr(X)$ or even $p(X)=\tr(X)$.
We say that a polynomial map $p$ from $F^{n^2}$ to $F^{n^2}$ is a \textsl{trace polynomial} if $p(x_{11},\dots,x_{nn})=P(X,\tr(X){\bf 1},\dots,\tr(X^n){\bf 1})$ for some polynomial $P(z_0,\dots,z_n)$ with zero constant term.
A polynomial $p:F^{n^2}\to F$ is a \textsl {pure trace polynomial} if $p(x_{11},\dots,x_{nn})=P(\tr(X),\dots,\tr(X^n))$.  (In the previous example we have $P(z_0,\dots,z_n)=z_1$.)

Recall that a polynomial $p:F^{n^2}\to F$ is called a matrix invariant if $p(X)=p(S X S^{-1})$ for every $S\in GL_n$, 
where $p(S X S^{-1})$ denotes the map  that first conjugates the matrix $X$ corresponding to the $n^2$-tuple $(x_{11},\dots,x_{nn})$ with $S$ and then applies $p$ on the $n^2$-tuple corresponding to the matrix $SXS^{-1}$. 
Matrix invariants are exactly the pure trace polynomials  \cite[Theorem 1.5.7]{Spring}. We will use this correspondence without further reference.

\begin{lemma}\label{nicsled}
If $V$ is the zero set in $F^{n^2}$ of trace polynomials $p_1,\dots,p_l$ and if $V$ is closed under scalar multiplication, then there exists a noncommutative homogeneous polynomial $f$ such that $\im (f)=V^\mathsf{c}\cup\{0\}$.
\end{lemma}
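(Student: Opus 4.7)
The plan is to encode the matrix-valued vanishing conditions $p_i(X)=0$ as scalar conditions via the nondegenerate trace pairing on $M_n$, and then to realize the resulting scalar inside the free algebra by means of Proposition~\ref{tr}. As a first reduction, I would note that because $V$ is a cone, one may split each $p_i$ into its $X$-homogeneous components (weighing $X$ with degree $1$ and $\tr(X^k)$ with degree $k$); each component is again a trace polynomial with zero constant term and the common zero locus is preserved. Hence I will assume each $p_i$ is $X$-homogeneous of some degree $d_i$.

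Next, applying Proposition~\ref{tr} to replace every $\tr(X^k)$ appearing in $p_i$ by $c_k(y,X)/c_0(y)$ and clearing denominators by a common power $c_0(y)^N$ yields noncommutative polynomials $Q_i(X,y)$ which are homogeneous of degree $Nt+d_i$ and satisfy $Q_i(X,y)=c_0(y)^N p_i(X)$ as functions on $M_n^{1+t}$ (the equality persists when $c_0(y)=0$, since then all $c_k(y,X)$ vanish as well). Introducing further auxiliary variables $y'=(y'_1,\dots,y'_t)$, matrix variables $M_1,\dots,M_l$, and padding variables $B'_1,\dots,B'_l$, I would define
\[
 f \;=\; \sum_{i=1}^{l}\, c_1\!\bigl(y',\; Q_i(X,y)\, M_i\, (B'_i)^{m_i}\bigr)\cdot X.
\]
The $i$-th summand evaluates to $c_0(y)^N c_0(y')\,\tr\!\bigl(p_i(X)M_i(B'_i)^{m_i}\bigr)\,X$, so $f$ is always of the form (scalar)$\cdot X$ and vanishes identically whenever $X\in V$. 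Conversely, for $X\in V^{\mathsf c}$, pick $i_0$ with $p_{i_0}(X)\ne 0$; setting $M_i=0$ for $i\ne i_0$, $B'_{i_0}=\mathbf{1}$, and using the nondegeneracy of the linear form $M\mapsto\tr(p_{i_0}(X)M)$ together with surjectivity of $c_0$ onto $F$ (by scaling, since $c_0$ is multilinear of degree $t$ and $F$ is algebraically closed), one can arrange $f(X,\dots)=X$. Combined with closure of $V^{\mathsf c}$ under nonzero scalar multiplication, this gives $\im(f)=V^{\mathsf c}\cup\{0\}$.

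The main obstacle is homogeneity. Without padding, the $i$-th summand has total degree $(N+1)t+d_i+2$, which varies with $i$ through $d_i$. Each additional factor $B'_i$ contributes $+1$ to the degree of that summand while preserving its ``scalar $\cdot\,X$'' structure (the value is still a trace times $X$ regardless of the choice of $B'_i$). Choosing $m_i=\max_j d_j-d_i\ge 0$ then equalizes the degrees; because the variable sets $\{M_i,B'_i\}$ are disjoint across the summands, no cancellation between distinct summands can occur, so $f$ is genuinely homogeneous of the common total degree, as required.
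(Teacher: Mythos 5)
Your proposal is correct and follows essentially the same route as the paper: clear trace denominators using Proposition~\ref{tr}, convert each matrix condition $p_i(X)=0$ into a scalar one by pairing against an auxiliary matrix variable via $c_1$, sum the resulting central polynomials, multiply by $X$, and then pad variables to make the degrees of the summands agree. The only (cosmetic) deviations are that you share the $c_0$-variables $y$ across summands and isolate a single summand by setting $M_i=0$ rather than zeroing out per-summand $c_0$-tuples, and you pad via $(B'_i)^{m_i}$ rather than raising a $Y_i$ to a power --- both changes are immaterial.
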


\begin{proof}
Let $p_i(x_{11},\dots,x_{nn})=P_i(X,\tr(X){\bf 1},\dots,\tr(X^n){\bf 1})$  for a polynomial $P_i(z_0,z_1,\dots,z_n)$, $1\le i\le l$. 
Let us write $\tr(X^i)=\frac{c_i(X_1,\dots,X_t,X)}{c_0(X_1,\dots,X_t)}$ where $c_0,c_i$, $1\le i\le n$, are  polynomials from Proposition \ref{tr}. 
We replace $P_i(X,\tr(X){\bf 1},\dots,\tr(X^n){\bf 1})$, $1\leq i\leq l$, with $Q_i(X,Y_i)=\tr(P_i(X,\tr(X){\bf 1},\dots,\tr(X^n){\bf 1})Y_i)$, $1\le i\le l$, which map to $F$.
Let $r_i-1$ be the degree of the polynomial $P_i(z_0,z_1,\dots,z_n)$ treated as a polynomial in the last $n$ variables, $z_1,\dots,z_n$. 
Then $c_0(X_1,\dots,X_t)^{r_i}Q_i(X,Y_i)$ is a central polynomial. 

We denote $\ul{X}_i=(X_{i1},\dots,X_{it})$, $1\le i\le l$, $\ul{Y}=(Y_1,\dots,Y_l)$. 
Then 
$c(\ul{X}_1,\dots,\ul{X}_l,X,\ul{Y})=\sum_{i=1}^l c_0(\ul{X}_i)^{r_i}Q_i(X,Y_i)$ is a sum of central polynomials and therefore  
  a central polynomial. If $A\in V$ we have $c(\ul{A}_1,\dots,\ul{A}_l,A,\ul{B})=0$ for any choice of matrices $A_{ij},B_i$. On the other hand, suppose that $A\not\in V$, hence $p_i(a_{11},\dots,a_{nn})\neq 0$ for some $i$. Consequently, there exists $B\in M_n$ such that $Q_i(A,B)\neq 0$. If we choose $A_{i1},\dots,A_{it}$ such that $c_0(\ul{A}_i)\neq 0$ and write $\ul{B}$ for the $l$-tuple that has $B$ on the $i$-th place and zero elsewhere, then 
$c(0,\dots,0,\ul{A}_i,0,\dots,0,A,\ul{B})=\mu {\bf 1}$ for some $0\neq \mu \in F$. 
Let 
$$f(\ul{X}_1,\dots,\ul{X}_l,X,\ul{Y})=c(\ul{X}_1,\dots,\ul{X}_l,X,\ul{Y})X.$$
By construction, $\mu a\in \im(f)$ and since $Q_i(X,Y_i)$ is linear in $Y_i$, all scalar multiples of $A$ belong to the image of $f$ (indeed, $\lambda\mu A=c_0(A_1,\dots,A_t)^{r_i}Q_i(A,\lambda B)A=f(0,\dots,0,\ul{A}_i,0,\dots,0,A,\lambda\ul{B})$ for every $\lambda \in F$).
Hence the image of $f$ equals $V^c\cup\{0\}$.

Since $V$ is closed under scalar multiplication we can assume that $p_i$, $1\le i\le l$, are homogeneous, since otherwise we can replace them by their homogeneous components. These also belong to $I(V)$, which can be easily seen by the Vandermonde argument. The homogeneous components of $p_i$ are also trace polynomials, which follows by comparing  both sides of the equality $p_i(\lambda x_{11},\dots,\lambda x_{nn})=P_i(\lambda X,\tr(\lambda X),\dots,\tr((\lambda X)^n))$.
Hence, we can assume that $P_i(X,\tr(X),\dots,\tr(X^n))$ are homogeneous polynomials of degree $d_i$. 
We denote $d=\max\{d_i+r_it,\;1\le i\le l\}$. If we replace $Q_i(X,Y_i)$ in the above construction by $Q_i(X,Y_i^{d-d_i-r_it+1})$
then $f$ becomes a homogeneous polynomial of degree $d+1$. 
 Noting that $F$ being algebraically closed guarantees that $\im(f)$ is closed under scalar multiplication it is easy to verify that the above proof remains valid  with  polynomials $Q_i(X,Y_i^{d-d_i-r_it+1})$ replacing polynomials $Q_i(X,Y_i)$.
\end{proof}
We illustrate this result with some examples of sets that can be realized as  images of noncommutative polynomials.
\begin{example}
 
(a) The union of matrices that are not nilpotent of the nilindex less or equal to $k$ and the zero matrix is the image of a noncommutative polynomial.
 The matrices whose $k$-th power equals zero are closed under conjugation by $GL_n$ and under scalar multiplication, and they are the zero set of the (trace) polynomial $X^k$. Hence, we can apply Lemma \ref{nicsled}.

(b) Matrices with at most $k$ distinct eigenvalues, $0\le k\le n-1$, are also the zero set of trace polynomials. Define polynomials $p_0(X)=X$, $q_l(z_1,\dots,z_{l+1})=\prod_{1\le i<j\le l+1}(z_i-z_j)^2$ and 
$$p_l(X)=\sum_{1\le i_1<\dots< i_{l+1}\le n} q_l(\lambda_{i_1}(X),\dots,\lambda_{i_{l+1}}(X)),\qquad 1\le l\le n-1,$$
 where $\lambda_i(X)$, $1\le i\le n$, are the eigenvalues of a matrix $X$. Note that the polynomials on the right-hand side of the above definition of $p_l$, $1\le l\le n-1$, are symmetric polynomials in the eigenvalues of the matrix $X$, and thus pure trace polynomials by Lemma \ref{simsled}.
The polynomials $p_l$, $k\le l\le n-1$, define the desired variety. Indeed, $p_{n-1}(X)$ is the discriminant of $X$ and a matrix $A$ is a zero of $p_{n-1}$ if and only if $A$ has at most $n-1$ distinct eigenvalues.
 Then we can proceed by reverse induction to show that the common zeros of $p_{n-1},\dots,p_k$ are  the matrices that have at most $k$ distinct eigenvalues supposing that the common zeros of $p_{n-1},\dots,p_{k+1}$ are the matrices that have at most $k+1$ distinct eigenvalues.
 If $A$ is a zero of $p_{n-1},\dots,p_{k+1}$, i.e. $A$ has at most $k+1$ distinct eigenvalues by the induction hypothesis, then $p_k(A)$ is a scalar multiple of $q_k(\lambda_{1},\dots,\lambda_{{k+1}})\neq 0$ where $\lambda_1,\dots,\lambda_{k+1}$ are possible distinct eigenvalues of $A$.  Therefore, $A$ is a zero of $p_k$ if the evaluation of $q_k$ in this $k+1$-tuple is equal to zero, i.e. if $A$ has at most $k$ distinct eigenvalues. By Lemma \ref{nicsled}, the matrices with at least $k$ distinct eigenvalues together with the zero matrix form the image of a noncommutative polynomial for every $1\le k\le n$.

(c) Define trace polynomials $t_i(X)=\tr(X^i)X-\tr(X)X^i$ for $2\leq i\leq n$. Let a matrix $A$ be a zero of $t_2,\dots,t_n$. Since $A$ is a zero of $t_2$, $A$ is a scalar multiple of an idempotent or $\tr(A)=0$. In the second case, $\tr(A^i)=0$, $1\le i\le n$, since $A$ is a zero of $t_i$, $2\le i\le n$. Thus, 
the variety defined by $t_i$, $2\le i\le n$, contains precisely the scalar multiples of idempotents and nilpotent matrices (only these have the trace of all powers equal to zero). Consequently, the complement of this variety, matrices that are not scalar multiples of an  idempotent and not nilpotent, together with the zero matrix equals the image of a noncommutative polynomial.
\end{example}

We will give two proofs of the following theorem. The first  one might  lead to  possible generalizations, while we find the second one, based on the idea suggested to us  by  Klemen \v Sivic, is quite interesting. 
We first introduce some notation and prove a lemma that will play a role also in the subsequent section.

Let $\phi:M_n\to F^n$ be the  map that assigns to every matrix the coefficients of
its characteristic polynomial. 
 More precisely, if  $x^n+\alpha_1x^{n-1}+\cdots+\alpha_n$ is the characteristic
polynomial of a matrix $a$, then $\phi(a)=(\alpha_1,\dots,\alpha_n)$. 
Note that $\phi$ is a  surjective polynomial map.   

\begin{lemma}\label{fi}
If $Z$ is a proper closed subset of $M_n$ that is closed under conjugation by $GL_n$, then $\phi(Z)$ is contained in a proper closed subset of $F^n$.
\end{lemma}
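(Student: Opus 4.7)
\textbf{Proof plan for Lemma \ref{fi}.} The plan is to split $Z$ into the part lying in the open locus of matrices with $n$ distinct eigenvalues and its complement, and to handle each piece by a dimension count.

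Let $\Delta\in F[z_1,\dots,z_n]$ denote the discriminant of the generic monic polynomial $z^n+z_1z^{n-1}+\cdots+z_n$, and let $U=\phi^{-1}(F^n\setminus V(\Delta))\subseteq M_n$ be the $GL_n$-invariant Zariski-open subset of matrices with $n$ distinct eigenvalues. Writing $\phi(Z)=\phi(Z\setminus U)\cup\phi(Z\cap U)$, the first piece is automatically contained in $V(\Delta)$, which is a proper closed subset of $F^n$. It therefore remains to bound $\phi(Z\cap U)$.

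For this I will use two classical facts: two matrices with $n$ distinct eigenvalues and the same characteristic polynomial are conjugate under $GL_n$, and the stabilizer of $\diag(\lambda_1,\dots,\lambda_n)$ with distinct $\lambda_i$ is an $n$-dimensional maximal torus. Consequently each fibre of $\phi|_U$ is a single $GL_n$-orbit of pure dimension $n^2-n$. Since $Z$ is $GL_n$-invariant, $Z\cap U$ is a union of such full fibres; and since $GL_n$ is connected, each irreducible component of $Z\cap U$ is itself $GL_n$-invariant, hence also a union of full fibres. Because $Z$ is a proper closed subset of the irreducible variety $M_n$ of dimension $n^2$, every irreducible component $Y$ of $Z\cap U$ satisfies $\dim Y\le n^2-1$, and the fibre-dimension theorem then yields $\dim\overline{\phi(Y)}=\dim Y-(n^2-n)\le n-1$. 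Thus $\overline{\phi(Z\cap U)}$ is a proper closed subset of $F^n$ of dimension at most $n-1$.

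Combining the two pieces, $\phi(Z)\subseteq V(\Delta)\cup\overline{\phi(Z\cap U)}$, which is a proper closed subset of $F^n$ because the irreducible variety $F^n$ cannot be written as the union of two proper closed subsets. The delicate point is the uniform equidimensionality of the fibres of $\phi|_U$, without which the dimension inequality would not descend cleanly to the image; beyond that the argument is routine. An alternative route would bypass the case split via geometric invariant theory: the map $\phi$ realises the categorical quotient $M_n\to M_n/\!/GL_n\cong F^n$ by a reductive group, so $\phi(Z)$ is already closed, and one need only exhibit a semisimple $a\notin Z$ and observe $\phi(a)\notin\phi(Z)$ (since the orbit of a semisimple matrix is the unique closed orbit in its fibre, any $b\in Z$ with $\phi(b)=\phi(a)$ would force $a\in\overline{GL_n\cdot b}\subseteq Z$). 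I prefer the dimension argument as more self-contained.
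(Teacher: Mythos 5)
Your proof is correct, and it differs from the paper's in the key step. Both arguments split along the discriminant locus: anything without (or, in the paper's version, not conjugate to a diagonal matrix with) $n$ distinct eigenvalues maps into $V(\disc)$, a proper closed subset of $F^n$. The difference is in how the remaining piece is bounded. The paper restricts to the $n$-dimensional linear slice $D$ of diagonal matrices, observes that $Z\cap D$ must be a proper closed subset of $D$ (else $Z\supseteq\overline{GL_n\cdot D}=M_n$), and so $\dim\overline{\phi(Z\cap D)}<n$ follows from nothing more than the elementary bound that a polynomial image has dimension at most that of the source; conjugation-invariance then upgrades this to all diagonalizable matrices in $Z$. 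You instead stay in the ambient $M_n$ and invoke the fibre-dimension theorem on the open locus $U$ of matrices with distinct eigenvalues, using that each fibre of $\phi|_U$ is a single $GL_n$-orbit of dimension $n^2-n$ and that every irreducible component of $Z\cap U$ is $GL_n$-invariant (connectedness of $GL_n$), so its image drops in dimension by exactly $n^2-n$. Both arguments are sound; the paper's linear-slice reduction is more elementary (no fibre-dimension theorem needed), while your version works intrinsically on $M_n$ and generalises more readily to settings without a convenient finite cross-section for the quotient. The GIT alternative you sketch at the end is also a clean route and is essentially the ``categorical quotient'' point of view on the same fact.
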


\begin{proof}
Since the closure of similarity orbits of the set $D$ of all diagonal matrices equals $M_n$, $Z\cap D$ is also a proper closed subset of $D\cong F^n$. Hence $\dim(Z\cap D)<n$. Therefore $\dim(\overline{\phi(Z\cap D)})<n$, which implies that $\overline{\phi(Z\cap D)}$ is a proper closed set of $F^n$. 
Denote by $\tilde{D}$ the set of all diagonalizable matrices. 
As $Z$ is closed under conjugation by $GL_n$, $\phi(Z\cap \tilde{D})=\phi(Z\cap D)$.
Decompose $\phi(Z)=\phi(Z\cap \tilde{D})\cup\phi(Z\cap \tilde{D}^\mathsf{c})$ and notice that $\phi(Z\cap \tilde{D}^c)$ is a subset of the proper closed subset of the variety defined by the discriminant, $V(\disc)$. Hence the closure of $\phi(Z)$ is a proper closed subset of $F^n$.
\end{proof}

\begin{theorem}\label{hiper}
Let $p$ be a commutative polynomial in $n^2$ variables. If $V(p)\subset F^{n^2}$ is closed under conjugation by invertible matrices then $p$ is a pure trace polynomial.
\end{theorem}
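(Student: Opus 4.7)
My plan is to reduce to the case when $p$ is irreducible and then exploit the smallness of the character group of $GL_n$. First I would factor $p=p_1^{e_1}\cdots p_k^{e_k}$ into pairwise non-associate irreducibles, so that $V(p)=V(p_1)\cup\cdots\cup V(p_k)$ is the irreducible decomposition. Conjugation by any $S\in GL_n$ is a linear automorphism of $M_n$, so it permutes the $V(p_i)$, giving a morphism $GL_n\to S_k$ into a finite (discrete) group. Since $F$ is algebraically closed of characteristic zero, $GL_n$ is an irreducible, hence connected, algebraic variety, so this morphism is constant. Therefore each $V(p_i)$ is already $GL_n$-invariant, and it suffices to treat the irreducible case.

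Assume from now on that $p$ is irreducible. Then $I(V(p))=(p)$ by the Nullstellensatz, and since $V(p)$ is $GL_n$-invariant, for every $S\in GL_n$ the polynomial $p(SXS^{-1})$ vanishes on $V(p)$. Hence $p(SXS^{-1})=q_S(X)\,p(X)$ for some $q_S$; comparing degrees forces $q_S$ to be a nonzero scalar $c(S)\in F^\times$. The function $c$ is multiplicative, and reading off the ratio of any two corresponding coefficients shows that $c\colon GL_n\to F^\times$ is a morphism of algebraic groups. The character group of $GL_n$ is generated by $\det$, so $c(S)=\det(S)^k$ for some $k\in\Z$.

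Finally, I would specialize to a scalar matrix $S=\la\cdot{\bf 1}$, for which conjugation acts trivially on $X$. The identity then becomes $p(X)=\la^{nk}p(X)$ for every $\la\in F^\times$, which forces $k=0$. Hence $p(SXS^{-1})=p(X)$ for all $S\in GL_n$, i.e.\ $p$ is a matrix invariant, and by the characterization of matrix invariants as pure trace polynomials recalled in Section \ref{pre}, $p$ itself is a pure trace polynomial; reassembling the factors settles the general case. The main subtleties, I expect, are the correct setup of the reduction to the irreducible case---relying on connectedness of $GL_n$ to rule out nontrivial permutations of the components---and invoking the standard identification of characters of $GL_n$ with powers of $\det$; the scalar-matrix trick that kills the character then works in one line.
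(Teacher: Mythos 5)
Your proof is correct, and structurally it is closest to the paper's ``second proof,'' but both of the two key steps are carried out by a genuinely different mechanism. For the reduction to irreducible $p$, the paper observes that each orbit closure $\overline{\{SXS^{-1}: S\in GL_n\}}$ is rationally parametrized, hence irreducible, so it sits inside a single component $V_i$; from this one concludes $V_i$ is invariant. You instead package the induced permutation action of $GL_n$ on the components as a map $GL_n\to S_k$ with closed fibres and invoke connectedness of $GL_n$ to make it constant; this is equally valid and perhaps more conceptual, though you should note explicitly that the fibres are closed, so the argument really is a ``continuous map from a connected space to a discrete one is constant.'' For showing the multiplier $\alpha_S=c(S)$ is identically $1$, the paper uses the short elementary trick of substituting $X=S$, giving $p(S)=\alpha_S\,p(S)$ and hence $\alpha_S=1$ on the dense set $GL_n\cap D(p)$, then extending by a density argument. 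You instead identify $c$ as an algebraic character of $GL_n$, invoke that every character of $GL_n$ is a power of $\det$, and kill the exponent by specializing to scalar matrices, on which conjugation is trivial. This is also correct, and avoids the density step, at the price of importing the classification of characters of $GL_n$. When writing your version up, do justify that $c$ is regular: fix a monomial with nonzero coefficient $\gamma$ in $p$; the corresponding coefficient of $p(SXS^{-1})$ is a polynomial in the entries of $S$ and $S^{-1}$, hence regular on $GL_n$, and $c(S)$ is its ratio to $\gamma$. Both proofs finish by the same identification of $GL_n$-invariants with pure trace polynomials and by observing that a product of pure trace polynomials is again one. (The paper's ``first proof'' is entirely different, going through the map $\phi$ to characteristic-polynomial coefficients, Lemma \ref{fi}, and a UFD argument; your approach does not resemble it.)
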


\begin{proof}[First proof]
By Lemma \ref{fi}, $\phi(V(p))$ is contained in a proper closed subset of $F^n$.
It thus belongs to $V(f)$ for some polynomial $f$. 
Define $\tilde{f}(X)=f(\alpha_1(X),\dots,\alpha_n(X))$ where $\alpha_1(X),\dots,\alpha_n(X)$ are the coefficients of the characteristic polynomial of a matrix $X$. 
As we have a bijective polynomial correspondence between the ``trace" tuple of a matrix $X$,  $(\tr(X),\dots,\tr(X^n))$, and its ``characteristic" coefficients, $(\alpha_1(X),\dots,\alpha_n(X))$, (see Section \ref{pre}), $\tilde{f}$ is a pure trace polynomial. 
We have  $V(p)\subset V(\tilde{f})$ and  by Hilbert's Nullstellensatz $\tilde{f}^n=pq$ for some $n\in \N$ and some polynomial $q$. 
Since $\tilde{f}$ is a pure trace polynomial we have $\tilde{f}(SXS^{-1})=\tilde{f}(X)$ for every $S\in GL_n$, $X\in M_n$, and, in consequence,  $p(SXS^{-1})q(SXS^{-1})=p(X)q(X)$. 
If $S=(s_{ij})$, then $S^{-1}=\frac{1}{\det(S)}S'$, where $S'$ is a matrix which elements are polynomial functions in $s_{ij}$, $1\le i,j\le n$. 
Thus, we can choose $k,l\in \N$ such that $\overline{p}(S,X)=\det(S)^k p(SXS^{-1})$, $\overline{q}(S,X)=\det(S)^l q(SXS^{-1})$ are polynomials. 
As $F[x_{11},\dots,x_{nn},s_{11},\dots,s_{nn}]$ is a unique factorization domain 
%and according to the minimality of $k,l$ 
we conclude from $\overline{p}(S,X)\overline{q}(S,X)=\det(S)^{k+l}p(X)q(X)$ that $\overline{p}(S,X)=\det(S)^mp_1(X)$ for some $m\in \Z$ and some polynomial $p_1$,  and hence $p(SXS^{-1})=\det(S)^{m-k} p_1(X)$. 
 Setting $S=1$ yields $p_1=p$, and, in consequence, $p(S)=\det(S)^{m-k}p(S)$ for every $S\in GL_n$, which implies $m=k$. (Indeed, $\det(S)^{m-k}$ has to be equal to $1$ on the open set $D(p)\cap GL_n$, and therefore on the whole $M_n$.)
Hence,  $p$ is a matrix invariant and according to the characterization of matrix invariants a pure trace polynomial.
\end{proof}

\begin{proof}[Second proof]
Firstly, we can assume that $p$ is irreducible. To see this we only need to observe that all irreducible components $V_i$ of $V(p)=\bigcup V_i$ are closed under conjugation by invertible matrices.  Take $X\in V_i$, then the variety $V_X=\overline{\{SXS^{-1},\;S\in GL_n\}}$ is rationally parametrized, and therefore irreducible (see, e.g., \cite[Proposition 4.5.6]{CLO}). Hence, we have $V_X\subseteq V_i$ for every $X\in V_i$, so $V_i$ is closed under conjugation by invertible matrices. In the rest of the proof we therefore assume $p$ to be irreducible. 

We fix an invertible matrix $S$ and define a polynomial $p_S(x_{11},\dots,x_{nn})=p(SXS^{-1})$, which means that we first conjugate the matrix $X$ corresponding to the $n^2$-tuple $(x_{11},\dots,x_{nn})$ with $S$ and then apply $p$ on the $n^2$-tuple corresponding to the matrix $SXS^{-1}$. According to the assumption of the theorem, $p$ and $p_S$ have equal zeros. Hence, $V(p)=V(p_S)$. As $p$ and hence also $p_S$ are irreducible, we have $p_S=\alpha_S p$ for some scalar $\alpha_S\in F$ by Hilbert's Nullstellensatz. We shall have established the lemma if we prove that $\alpha_S=1$ for every $S\in GL_n$.
Indeed, then we can use the characterization of matrix invariants. 
We have $p(SXS^{-1})=\alpha_S p(X)$ for every $S\in GL_n$, $X\in M_n$. In particular, $p(S)=\alpha_S p(S)$, which implies $\alpha_S=1$
for every $S\in U=GL_n\cap D(p)$. Then for every $X\in M_n$ the polynomials $p(SX)$ and $p(XS)$ in $n^2$ variables $s_{11},\dots,s_{nn}$ equal on $U$. Since $U$ is a dense subset of $F^{n^2}$, they are equal. Thus $\alpha_S=1$ for every $S\in GL_n$.
\end{proof}

The next corollary rephrases the last statement in the language of  invariant theory.
\begin{corollary}
If for a polynomial $p:F^{n^2}\to F$  and for every $X\in M_n,S\in GL_n$ we have $p(SXS^{-1})=0$ if and only if  $p(X)=0$, then $p$ is a matrix invariant.
\end{corollary}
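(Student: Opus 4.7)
The plan is to observe that this corollary is essentially an immediate reformulation of Theorem \ref{hiper}. First I would unpack the hypothesis: the statement ``$p(SXS^{-1})=0 \iff p(X)=0$ for all $X \in M_n$, $S \in GL_n$'' says exactly that the zero set $V(p) \subset F^{n^2}$ is stable under $GL_n$-conjugation, since the ``only if'' direction shows $X \in V(p) \Rightarrow SXS^{-1} \in V(p)$ (the ``if'' direction is actually redundant because $S^{-1}$ is also in $GL_n$).

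Next I would apply Theorem \ref{hiper} directly to conclude that $p$ is a pure trace polynomial. Finally, I would invoke the characterization of matrix invariants cited just after the introduction of the term (from \cite[Theorem 1.5.7]{Spring}), which says matrix invariants are exactly the pure trace polynomials. This yields that $p$ is a matrix invariant, completing the proof.

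There is essentially no obstacle: all the substantive work has been done in Theorem \ref{hiper}, and the corollary is simply a restatement in the language of invariant theory. The only thing to check carefully is that my unpacking of the biconditional really gives the same conjugation-invariance of $V(p)$ that Theorem \ref{hiper} uses, which is immediate. Thus the proof is just two or three lines: apply Theorem \ref{hiper}, then the matrix-invariants characterization.
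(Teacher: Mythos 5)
Your proposal is correct and follows exactly the route the paper intends: the paper explicitly introduces this corollary as a rephrasing of Theorem \ref{hiper} in the language of invariant theory and gives no separate proof, so the argument is precisely to note that the hypothesis says $V(p)$ is conjugation-stable, apply Theorem \ref{hiper} to get that $p$ is a pure trace polynomial, and then invoke the identification of pure trace polynomials with matrix invariants. Your remark that the ``if'' direction of the biconditional is redundant is also accurate.
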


Having established Lemma \ref{nicsled} and Theorem $\ref{hiper}$, we can now state the main result of this section.

\begin{theorem}\label{std}
Let $U=D(p)$ be a standard open set in $F^{n^2}$ closed under conjugation by $GL_n$ and nonzero scalar multiplication. There exists a noncommutative homogeneous polynomial $f$ such that $\im(f)=U\cup \{0\}$.
\end{theorem}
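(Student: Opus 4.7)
The plan is to combine Theorem \ref{hiper} and Lemma \ref{nicsled}. Since $U=D(p)$ is closed under conjugation by $GL_n$, the complement $V(p)$ is also $GL_n$-invariant. Theorem \ref{hiper} then guarantees that $p$ is a pure trace polynomial, so $p(X)=P(\tr(X),\dots,\tr(X^n))$ for some commutative polynomial $P$.

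To invoke Lemma \ref{nicsled} I would first promote $p$ to a trace polynomial in the paper's sense by setting $\tilde p(X):=p(X)\cdot X$, which corresponds to the polynomial $P'(z_0,z_1,\dots,z_n)=P(z_1,\dots,z_n)\,z_0$. This $P'$ has zero constant term, so $\tilde p$ qualifies as a trace polynomial, and its zero set in $F^{n^2}$ is precisely $V(p)\cup\{0\}$.

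Next I would verify that $V(p)\cup\{0\}$ is closed under scalar multiplication. For $v\in V(p)$ and $\lambda\neq 0$, if $\lambda v$ were in $U$ then by the stated closure of $U$ under nonzero scalars the vector $v=\lambda^{-1}(\lambda v)$ would also belong to $U$, contradicting $v\in V(p)$. Hence $V(p)$ is closed under nonzero scalar multiplication, and adjoining $\{0\}$ yields closure under all scalars.

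Applying Lemma \ref{nicsled} to the single trace polynomial $\tilde p$ then produces a homogeneous noncommutative polynomial $f$ with $\im(f)=(V(p)\cup\{0\})^{\mathsf{c}}\cup\{0\}=(U\setminus\{0\})\cup\{0\}=U\cup\{0\}$, as required. The only delicate point is the choice $\tilde p=p\cdot X$ rather than $p\cdot\mathbf{1}$: the trace-polynomial convention forces a zero constant term, and multiplying by $X$ is the cleanest way to satisfy this; the extra element $\{0\}$ that this introduces into the zero set is harmlessly absorbed by the $\cup\{0\}$ appearing in the conclusion of Lemma \ref{nicsled}. Given the two earlier results, no genuine obstacle remains—the work is concentrated in those two theorems, and this proof is essentially a one-line combination.
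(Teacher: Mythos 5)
Your proof is correct and follows the route the paper intends: the text immediately preceding Theorem~\ref{std} signals exactly this combination of Theorem~\ref{hiper} and Lemma~\ref{nicsled}. The one small step the paper leaves implicit and you supply explicitly --- replacing the scalar-valued pure trace polynomial $p$ by the matrix-valued trace polynomial $\tilde p=p\cdot X$, which has zero constant term and zero set $V(p)\cup\{0\}$ --- is handled correctly, and the bookkeeping with $\{0\}$ in the complement matches.
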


To generalize this theorem to arbitrary open subsets of $F^{n^2}$ that are closed under conjugation by $GL_n$ and scalar multiplication with the similar approach (employing Lemma \ref{nicsled}), one would need to prove that every variety that is closed under conjugation by $GL_n$ and under scalar multiplication can be determined by  trace polynomials. (Those trace polynomials may include some extra variables. It is easy to adjust the proof of Lemma \ref{nicsled} to that slightly more general context. See Example \ref{min2} below.) However, we do not know whether this is true or not.

\begin{example}\label{min2}
Let $V$ be the set of all matrices having minimal polynomial of degree at most 2. This is a closed set since each of its element is a zero of the Capelli polynomial $C_5(1,X,X^2,Y,Z)$ for arbitrary $Y,Z\in M_n$, and due to \cite[Theorem 1.4.34]{Row} for $X\not\in V$ there exist $Y,Z\in M_n$ such that $C_5(1,X,X^2,Y,Z)\neq 0$. Hence $c_0(X_1,\dots,X_t)\tr(C_5(1,X,X^2,Y,Z)Y_1)X$ has in its image exactly the zero matrix and matrices whose minimal polynomial has degree at least 3.
\end{example}

\section{density}
Each noncommutative polynomial $f$ in $d$ variables gives rise to  a function 
 $f:M_n^d\to M_n$. In this section we will consider this function as a polynomial
map in $n^2d$ variables. 
 We will be concerned with some  topological aspects of its image on $M_n$. We
discuss the sufficient conditions for establishing the ``dense counterpart" of
Lvov's conjecture. By this we mean the  question whether the image of a
multilinear polynomial $f$ on $M_n$ is dense in $M_n$ or in $M_n^0$, assuming that
$f$ is neither a polynomial identity nor a central polynomial of $M_n$.

Recall that the map $\phi:M_n\to F^n$,  introduced in the previous section, assigns to every matrix the coefficients of
its  characteristic polynomial.
The restriction of  $\phi$ to $M_n^0$ will be denoted by $\phi_0$. Identifying
$\{0\}\times F^{n-1}$ with  $F^{n-1}$, we may and we shall  consider $\phi_0$ as a
map into  $F^{n-1}$.

By saying that $\im(f)$ is dense in $F^{n^2-1}$ we mean that the image of $f$ is
dense in $M_n^0$, an
($n^2-1$)-dimensional  space over $F$,  with the inherited topology from $F^{n^2}$.

\begin{lemma}\label{gost}
Let $f$ be a noncommutative polynomial. Then $\im(f)$ is dense in $F^{n^2}$ (resp. $F^{n^2-1}$ if $\tr(f)=0$) if and only if  $\im(\phi(f))$ (resp. $\im(\phi_0(f))$) is dense in $F^n$ (resp. $F^{n-1}$).
\end{lemma}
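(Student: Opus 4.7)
The plan is to argue both directions via Zariski continuity of the characteristic coefficient maps $\phi$ and $\phi_0$, the harder one relying on Lemma \ref{fi}.

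For the easy ($\Rightarrow$) direction, the polynomial map $\phi:M_n\to F^n$ is Zariski continuous and surjective, so if $\overline{\im(f)}=M_n$ then
$$F^n=\phi(M_n)=\phi\bigl(\overline{\im(f)}\bigr)\subseteq \overline{\phi(\im(f))}=\overline{\im(\phi(f))},$$
giving density of $\im(\phi(f))$ in $F^n$. The trace-zero case is identical after observing that $\tr(f)=0$ forces $\im(f)\subseteq M_n^0$ and that $\phi_0:M_n^0\to F^{n-1}$ is likewise a surjective polynomial map (surjectivity is seen on diagonal trace-zero matrices).

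For the ($\Leftarrow$) direction I set $Z=\overline{\im(f)}$, a closed subset of $M_n$ (resp.\ $M_n^0$) invariant under conjugation by $GL_n$ because $\im(f)$ itself is. If $\im(f)$ were not dense in $M_n$, then $Z$ would be a proper conjugation-invariant closed subset, and Lemma \ref{fi} would place $\phi(Z)$ inside a proper closed subset $W\subsetneq F^n$. But $\im(\phi(f))=\phi(\im(f))\subseteq\phi(Z)\subseteq W$, contradicting the assumed density of $\im(\phi(f))$ in $F^n$.

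The trace-zero version requires the analog of Lemma \ref{fi} for $M_n^0$: any proper closed conjugation-invariant $Z\subseteq M_n^0$ has $\phi_0(Z)$ contained in a proper closed subset of $F^{n-1}$. I would prove this by copying the argument of Lemma \ref{fi} verbatim in the trace-zero setting: the similarity orbits of the trace-zero diagonal matrices $D^0$ are dense in $M_n^0$, so $Z\cap D^0$ is a proper closed subset of $D^0\cong F^{n-1}$ and hence has dimension $<n-1$; then $\overline{\phi_0(Z\cap D^0)}$ is a proper closed subset of $F^{n-1}$, and $\phi_0(Z\cap \widetilde{D^0})=\phi_0(Z\cap D^0)$ by conjugation-invariance, while $\phi_0$ sends the non-diagonalizable part $Z\cap(\widetilde{D^0})^{\mathsf c}$ into the discriminant hypersurface of $F^{n-1}$; their union is still proper closed. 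With this in hand, the contradiction argument above applies mutatis mutandis. The only subtlety is verifying this trace-zero analog of Lemma \ref{fi}; the rest is a soft application of continuity and surjectivity of $\phi$ and $\phi_0$.
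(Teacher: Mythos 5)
Your argument is correct and follows the same route as the paper: the forward direction via surjectivity and Zariski continuity of $\phi$, and the reverse direction by applying Lemma \ref{fi} to the conjugation-invariant closure of $\im(f)$. The only substantive addition is that you actually spell out the trace-zero analog of Lemma \ref{fi}, which the paper dismisses with ``can be handled in much the same way'' --- your sketch of that analog is accurate.
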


\begin{proof}
Assume that $\im(\phi(f))$ is dense in $F^n$. 
Denote by $Z$  the Zariski closure of $\im(f)$. As $\im(f)$ is closed under conjugation by $GL_n$ so is $Z$, thus we can apply Lemma  \ref{fi} to derive that  $Z= F^{n^2}.$
Conversely, if $\im(f)$ is dense in $F^{n^2}$ then   $\im(\phi(f))$ is dense in $F^n$ since $\phi$ is a surjective continuous map.
The respective part can be handled in much the same way, the only difference being the analysis of respective maps  within the framework of $M_n^0$.
\end{proof}

Let $f$ be a noncommutative polynomial depending on $d$ variables.
In the following corollary we regard $\tr(f)$ as a commutative polynomial in $n^2d$ commutative variables.

\begin{proposition}\label{generic}
The image of a polynomial $f$ is dense in $F^{n^2}$ (resp. $F^{n^2-1}$ if $\tr(f)=0$) if and only if $\tr(f),\dots,\tr(f^n)$ (resp. $\tr(f^2),\dots,\tr(f^n)$) are algebraically independent. 
\end{proposition}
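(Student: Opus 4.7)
The plan is to combine Lemma \ref{gost} with the standard characterization of dominant polynomial maps via algebraic independence. I will treat the nontraceless case first and then explain how to pass to $M_n^0$.

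First I will reduce from density of $\im(f)$ in $F^{n^2}$ to density of the trace tuple. By Lemma \ref{gost}, $\im(f)$ is dense in $F^{n^2}$ if and only if $\im(\phi(f))$ is dense in $F^n$, where $\phi$ records the coefficients of the characteristic polynomial. As noted in Section \ref{pre}, there is a polynomial bijection $F^n \to F^n$ with polynomial inverse sending $(\alpha_1(X),\dots,\alpha_n(X))$ to $(\tr(X),\dots,\tr(X^n))$. Since such a bijection is in particular a homeomorphism for the Zariski topology, density of $\im(\phi(f))$ in $F^n$ is equivalent to density of the image of the polynomial map
\[
\Psi:F^{n^2d}\to F^n,\qquad (a_1,\dots,a_d)\mapsto \bigl(\tr(f(\ua)),\tr(f(\ua)^2),\dots,\tr(f(\ua)^n)\bigr).
\]

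Next I will apply the standard fact that a polynomial map $\Psi=(p_1,\dots,p_n):F^N\to F^n$ has Zariski-dense image if and only if its components $p_1,\dots,p_n$ are algebraically independent over $F$. For the easy direction, if there is a nonzero polynomial relation $q(p_1,\dots,p_n)=0$ in $F[z_1,\dots,z_n]$, then $\im(\Psi)\subseteq V(q)$ is contained in a proper closed set. Conversely, if $\overline{\im(\Psi)}$ is a proper closed subset of $F^n$, then it lies in $V(q)$ for some nonzero $q$; evaluating gives $q(p_1,\dots,p_n)\equiv 0$ on $F^N$, and since $F$ is infinite this forces the algebraic dependence $q(p_1,\dots,p_n)=0$ as polynomials. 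Applying this to the components of $\Psi$ yields the first equivalence.

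For the traceless case, assume $\tr(f)=0$, so that $\im(f)\subseteq M_n^0$ and $\phi_0(f)$ takes values in the hyperplane $\{0\}\times F^{n-1}\cong F^{n-1}$. Lemma \ref{gost} gives density of $\im(f)$ in $M_n^0\cong F^{n^2-1}$ if and only if density of $\im(\phi_0(f))$ in $F^{n-1}$. The polynomial correspondence from Section \ref{pre} still applies, but under $\tr(f)=0$ its first component is identically zero and the remaining components correspond via a polynomial bijection to $(\tr(f^2),\dots,\tr(f^n))$; hence density in $F^{n-1}$ is equivalent to algebraic independence of $\tr(f^2),\dots,\tr(f^n)$ by the same argument as above.

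No real obstacle is anticipated: the only point requiring a small check is that the bijective polynomial change of coordinates between $\phi$-data and trace data descends cleanly to the hyperplane $\tr=0$, so that the redundant coordinate $\tr(f)\equiv 0$ is simply dropped and density of the remaining $(n-1)$-tuple is the correct condition.
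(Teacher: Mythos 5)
Your proposal is correct and follows essentially the same approach as the paper: reduce via Lemma \ref{gost} to density of the characteristic-polynomial-coefficient map, use the polynomial bijection between coefficient tuples and trace tuples, and invoke the standard dominance criterion (dense image iff algebraically independent components), with the traceless case handled by restricting the bijection to the hyperplane $\tr = 0$. The paper phrases the dominance step directly in terms of the coefficients, but the logical content and all key ingredients coincide with yours.
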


\begin{proof}
We have a bijective polynomial map 
from $F^n$ to $F^n$ (whose inverse is also a polynomial map), which maps the coefficients of the characteristic polynomial of an arbitrary matrix $a\in M_n$ to its ``trace" tuple, $(\tr(a),\dots,\tr(a^n))$ (see Section \ref{pre}). Hence $\tr(f),\dots,\tr(f^n)$ are algebraically independent if and only if the coefficients of the characteristic polynomial of $f$ 
 are algebraically independent.

Assume that the coefficients of the characteristic polynomial of $f$ are algebraically dependent. 
Then the image of $\phi(f)$ is contained in a proper algebraic subvariety in $F^n$, which is in particular not dense in $F^n$, therefore $\im(f)$ cannot be dense in $F^{n^2}$. To prove the converse assume that the coefficients of the characteristic polynomial of $f$ are algebraically independent. 
Then the closure of $\im(\phi(f))$ cannot be a proper subvariety and is thus dense in $F^n$. 
We can now apply Lemma \ref{gost} to conclude the proof of the first part. 

The respective part of Lemma \ref{gost} yields in the same manner as above the respective part of this corollary.
\end{proof}

Let $X$ be an irreducible algebraic variety. Recall that the closure of the image of a  polynomial map $p:X\to F^k$ is an irreducible algebraic variety. 
Thus, if $p(X)\cap (F^{k-1}\times \{0\})$ is dense in $F^{k-1}\times \{0\}$ and $p(X)\not\subseteq F^{k-1}\times \{0\}$
 then the (Zariski) closure $Z$ of $p(X)$ equals $F^k$. (Suppose on contrary that $Z=V(p_1,\dots,p_l)\neq F^k$. 
We can assume that $p_1(z_1,\dots,z_k)\neq \alpha z_k^r$ for $r\in \N$, $\alpha\in F$. Write $p_1(z_1,\dots,z_k)=\sum_{i=0}^m q_{i}(z_1,\dots,z_{k-1})z_k^i$ and note that $q_{0}$ equals zero 
since $Z\cap (F^{k-1}\times\{0\})=F^{k-1}\times\{0\}$. 
Thus, there exists the maximal  $r\ge 1$ such that we can write $p_1(z_1,\dots,z_k)=q(z_1,\dots,z_k)z_k^{r}$ for some nonconstant polynomial $q$. 
Hence, $V(p_1)=V(q)\cup V(z_k)$ and, by assumptions and choice of $r$, $Z\neq V(q)\cap Z \neq \emptyset$, 
$Z\neq V(z_k)\cap Z \neq \emptyset$. We derived a contradiction, $Z=Z\cap V(p_1)=(Z\cap V(q))\cup (Z\cap V(z_k))$.)

In the next lemma we will see how the image of a polynomial $f$ evaluated on $M_{n-1}$ impacts $\im(f)$ on $M_n$. In order to distinguish between these  images we write $\im_k(f)$ for $\im(f)$ evaluated on $M_k$. We identify  $M_{n-1}$ with 
$\left(\begin{array}{ccc}
M_{n-1}& 0\\
0 & 0
\end{array}\right)
$
 inside $M_n$.

\begin{lemma}\label{n-1,n}
If $\im_{n-1}(f)\cap M_{n-1}^0$ is dense in $M_{n-1}^0$ then $\im_n(f)$ is dense is $M_n^0$. If, additionally, $\im_n(f)\not\subseteq M_n^0$ then $\im_n(f)$ is dense in $M_n$.
\end{lemma}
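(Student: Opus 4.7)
The plan is to deduce both conclusions from the irreducibility argument in the paragraph just above the lemma, applied to the characteristic-polynomial map. Setting $Z:=\overline{\im_n(f)}$, this closure is irreducible (being the closure of the image of the irreducible variety $M_n^d$ under a polynomial map) and is closed under $GL_n$-conjugation. The second conclusion follows formally from the first: if $\im_n(f)\not\subseteq M_n^0$ and $Z\supseteq M_n^0$, then the irreducible $Z$ strictly contains $M_n^0$ (of dimension $n^2-1$), forcing $\dim Z=n^2$ and thus $Z=M_n$.

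For the first conclusion I will use the embedding $\iota:M_{n-1}\hookrightarrow M_n$, $A\mapsto\begin{pmatrix}A&0\\0&0\end{pmatrix}$, which is an $F$-algebra homomorphism sending $0$ to $0$. Assuming $f$ has zero constant term, $f(\iota(\underline a))=\iota(f(\underline a))$ for every $\underline a\in M_{n-1}^d$, so $\iota(\im_{n-1}(f)\cap M_{n-1}^0)\subseteq\im_n(f)$; taking closures with the density hypothesis yields $\iota(M_{n-1}^0)\subseteq Z$, and by $GL_n$-invariance $Z$ contains $\overline{GL_n\cdot\iota(M_{n-1}^0)}=V(\det)\cap M_n^0$ (the Zariski-dense open subset of trace-zero matrices with a simple zero eigenvalue is similar to an embedded trace-zero $(n-1)\times(n-1)$ matrix). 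I then apply the preceding paragraph to the polynomial map $p:=\phi_0\circ f:M_n^d\to F^{n-1}$ (treating $\tr(f)\equiv 0$; the case $\tr(f)\not\equiv 0$ is analogous, working with $\phi:M_n^d\to F^n$ and the trace-zero coordinate hyperplane in $F^n$). The hyperplane $F^{n-2}\times\{0\}\subset F^{n-1}$ equals $\phi_0(V(\det)\cap M_n^0)$, so the inclusion just established gives $p(M_n^d)\supseteq F^{n-2}\times\{0\}$ and the density premise is met. Provided $p(M_n^d)\not\subseteq F^{n-2}\times\{0\}$, the paragraph delivers $\overline{p(M_n^d)}=F^{n-1}$, and Lemma~\ref{gost} concludes that $\im_n(f)$ is dense in $M_n^0$.

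The remaining step, and what I expect to be the main obstacle, is exhibiting some $\underline a\in M_n^d$ with $\det f(\underline a)\neq 0$. The density hypothesis produces $\underline b\in M_{n-1}^d$ with $f(\underline b)\in M_{n-1}^0$ and $\det_{n-1}f(\underline b)\neq 0$, because trace-zero matrices with nonzero determinant form a nonempty Zariski-open subset of $M_{n-1}^0$. For block-diagonal lifts $a_i=\begin{pmatrix}b_i&0\\0&c_i\end{pmatrix}$ one computes $\det f(\underline a)=\det_{n-1}f(\underline b)\cdot f(c_1,\dots,c_d)$, the last factor being $f$'s commutative shadow; when that shadow does not vanish, suitable scalars $\underline c$ finish the job. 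When $f$ lies in the commutator ideal so the shadow vanishes identically, one must resort to a more delicate off-diagonal perturbation breaking the singularity of $\iota(f(\underline b))$. Showing that such a perturbation always succeeds---equivalently, that $Z$ cannot remain trapped inside the proper $GL_n$-invariant subvariety $V(\det)\cap M_n^0$---is the technical crux.
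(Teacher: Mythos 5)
Your proposal follows essentially the same architecture as the paper's proof: pass to the characteristic-polynomial map $\phi$, use the block embedding $\iota:M_{n-1}\hookrightarrow M_n$ to see that density in $M_{n-1}^0$ forces $\overline{\im_n(\phi(f))}\supseteq\{0\}\times F^{n-2}\times\{0\}$, invoke the irreducibility discussion preceding the lemma, and exhibit a trace-zero matrix with nonzero determinant in $\im_n(f)$ to push the closure up by one dimension. The second conclusion is handled by the same irreducibility argument once a nonzero-trace value is known to exist, exactly as in the paper.

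However, you leave a genuine gap precisely where the paper leans on an external result. You must exhibit $\underline a\in M_n^d$ with $\tr f(\underline a)=0$ and $\det f(\underline a)\neq 0$; you reduce this, via block-diagonal lifts, to the nonvanishing of the commutative shadow of $f$, and you concede the problem when that shadow vanishes identically, calling it "the technical crux." But the shadow vanishes identically precisely for polynomials in the commutator ideal---in particular for every Lie polynomial, which is the main case the paper cares about (and, more generally, for any multilinear $f$ whose coefficients sum to zero). So your argument as written misses the cases that matter most, and the admission that "one must resort to a more delicate off-diagonal perturbation" is an acknowledgment of the gap, not a proof. The paper closes exactly this gap by citing \cite[Theorem 2.4]{LeZhou} for the existence of an invertible trace-zero value of $f$ on $M_n$; without that (or an equivalent argument), your proof is incomplete.
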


\begin{proof}
Assume that $\im_{n-1}(f)\cap M_{n-1}^0$ is dense in $M_{n-1}^0$. Therefore $\im_n(\phi(f))\cap (\{0\}\times F^{n-2}\times \{0\}) $ is dense in $\{0\}\times F^{n-2}\times \{0\}$. (The last component of the polynomial map $\phi(f)$ is $\det(f)$.) According to the discussion preceding the lemma, $\im(\phi(f))\cap(\{0\}\times F^{n-1})$ is dense in $\{0\}\times F^{n-1}$ if it contains an invertible matrix.
The later was observed in \cite[Theorem 2.4]{LeZhou}. 
Thus, $\im_n(f)\cap M_n^0$ is dense in $M_n^0\cong F^{n^2-1}$ by Lemma \ref{gost}. 
If, additionally, there exists a matrix in the image of $f$ with nonzero trace, $\im_n(f)$ is dense in $M_n$ by the above discussion identifying $M_n^0$ with $F^{n^2-1}$.
\end{proof}

\begin{corollary}
If a multilinear polynomial $f$ is neither a polynomial identity nor a central polynomial of $M_2$, then $\im(f)$ is dense in $M_n$ for every $n\geq 2$.
\end{corollary}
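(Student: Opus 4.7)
The natural approach is induction on $n\ge 2$, with the Kanel--Belov--Malev--Rowen resolution of Lvov's conjecture for $2\times 2$ matrices \cite{Bel} as the base case and Lemma \ref{n-1,n} as the inductive engine; the invariant to propagate is that $\im_n(f)\cap M_n^0$ is dense in $M_n^0$.

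For $n=2$, the theorem of \cite{Bel} asserts that, since $f$ is multilinear, $\im_2(f)$ is a linear subspace of $M_2$. Being also stable under conjugation by $GL_2$ and under nonzero scalar multiplication, it must be one of $\{0\}$, $F\cdot I_2$, $M_2^0$, $M_2$; the hypothesis that $f$ is neither an identity nor central on $M_2$ rules out the first two, so $\im_2(f)\in\{M_2^0,M_2\}$. In either case $\im_2(f)\cap M_2^0=M_2^0$ is (tautologically) dense in $M_2^0$. The inductive step is then immediate: assuming density of $\im_{n-1}(f)\cap M_{n-1}^0$ in $M_{n-1}^0$, the first clause of Lemma \ref{n-1,n} yields density of $\im_n(f)\cap M_n^0$ in $M_n^0$, which advances the induction to all $n\ge 2$.

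To upgrade density in $M_n^0$ to density in the whole of $M_n$, it suffices to exclude $\im_n(f)\subseteq M_n^0$, i.e.\ to exhibit an evaluation of $f$ on $M_n$ with nonzero trace. In the case $\im_2(f)=M_2$ such an evaluation already exists on $M_2$, and embedding $M_2\hookrightarrow M_n$ as the upper-left block transports it to $M_n$; the second clause of Lemma \ref{n-1,n} then delivers density of $\im_n(f)$ in $M_n$. (The alternative $\im_2(f)=M_2^0$ is exactly the trace-zero situation, where density in $M_n^0$ is the correct target.) There is no genuine obstacle here beyond correctly identifying the invariant to propagate through the induction and invoking the two cited results at the right moments; all the difficulty has been packaged into \cite{Bel} and Lemma \ref{n-1,n}.
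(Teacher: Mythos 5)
Your proof is correct and follows essentially the same route as the paper, which simply says ``Apply \cite[Theorem 2]{Bel} and Lemma \ref{n-1,n}.''; you have unpacked the induction that this one-liner compresses. One small imprecision worth flagging: in your final parenthetical you identify the case $\im_2(f)=M_2^0$ with ``the trace-zero situation,'' but $\tr(f)$ being identically zero on $M_2$ does not force $\tr(f)$ to vanish on $M_n$ for $n>2$ (for multilinear $f$ of sufficiently high degree, the pure trace monomials $\tr(x_{\sigma(1)}\cdots x_{\sigma(d)})$ satisfy nontrivial linear relations on $M_2$ that disappear on larger matrices). This does not damage the argument, since the first clause of Lemma \ref{n-1,n} always propagates density in $M_n^0$, and whenever some evaluation on $M_n$ has nonzero trace the second clause upgrades this to density in $M_n$; it only means that which of the two conclusions holds for a given $n$ is determined by $\im_n(f)$, not by $\im_2(f)$ alone.
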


\begin{proof}
Apply \cite[Theorem 2]{Bel} and Lemma \ref{n-1,n}.
\end{proof}

In view of Lemma \ref{n-1,n} it would suffice to verify the density version of Lvov's conjecture for a polynomial $f$ evaluated on $M_n$ for such $n$ that $f$ is a polynomial identity or a central polynomial of $M_{n-1}$ and is not a polynomial identity or a central polynomial of $M_n$. 
The first step in this direction may be to establish the density of the image of the standard polynomials $St_n$.

The following questions arise when trying to establish a connection between  Lvov's conjecture and its dense counterpart. 
Does the density of $\im(f)$ in $M_n$ or in $M_n^0$ for a multilinear polynomial $f$ imply that $\im(f)=M_n$ or $M_n^0$, respectively? Is the image of a multilinear polynomial closed in $F^{n^2}$?

\begin{remark}
The image of a homogeneous polynomial is not necessarily closed in $F^{n^2}$. Lemma \ref{nicsled} provides examples of such homogeneous polynomials.
\end{remark}

\begin{remark}
We were dealing with the Zariski topology, however, if the underlying field $F$ equals $\CC$, the field of complex numbers, all statements remain valid when we replace the Zariski topology with the (more familiar) Euclidean topology. This rests on the result from algebraic geometry (see, e.g.,  \cite[Theorem 10.2]{Mil}) asserting that the image of a polynomial map $g$ contains a Zariski open set of its closure. Thus, if the image of $g:\CC^m\to \CC^k$ is dense in the Zariski topology,
then it contains a dense open subset, which is clearly open and also dense in $\CC^k$ in the Euclidean topology. (Indeed, its complement, which is a set of zeros of some polynomials,  cannot contain an open set.) Consequently, the image of a polynomial map $g:\CC^m\to\CC^k$ is dense in the Zariski topology in $\CC^k$ if and only if it is dense in the Euclidean topology. However, the question whether the image of a multilinear polynomial $f:M_n(\CC)^{d}\to M_n(\CC)$ is closed in the Euclidean topology in $M_n(\CC)$ might be approachable with tools  of complex analysis.
\end{remark}

\section{zero trace squares of polynomials}\label{tr^2}
Let $f$ be a polynomial that is not an identity of $M_n$.
The simplest situation where the conditions of  Corollary \ref{generic} are not fulfilled is when $\tr(f^2 )=0$. Let us first show that this can actually occur. The proof of the next proposition is due to Igor Klep who has kindly allowed us to include it here.

\begin{proposition}
Let $n=2^m \ell$, where $\ell>1$ is odd. Then there exists a multihomogeneous polynomial $f$ which is not a polynomial identity of $M_n$ with $\tr(f^2)=0$ on $M_n$.
\end{proposition}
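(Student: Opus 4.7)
The proof strategy rests on a simple spectral identity: for any integer $j\geq 3$ with $j\mid n$, the matrix ${\bf w}_j^2\in M_n$ is diagonal with each entry $\mu_j^{2i}$ of multiplicity $r=n/j$, and $\mu_j^2$ is a root of unity of order $j/\gcd(2,j)\geq 2$, so $\sum_{i=0}^{j-1}\mu_j^{2i}=0$ and hence $\tr({\bf w}_j^2)=0$. Since $n=2^m\ell$ with $\ell>1$ odd, we may pick any odd $j\geq 3$ dividing $\ell$ (for instance $j=\ell$, or an odd prime factor of $\ell$). If one can exhibit a multihomogeneous, non-identically-zero polynomial $f$ on $M_n$ that is $j$-central, then Theorem~\ref{koncen} gives that every nonnilpotent value of $f$ is similar to a scalar multiple $\lambda{\bf w}_j$; on such values $\tr(f^2)=\lambda^2\tr({\bf w}_j^2)=0$, and on nilpotent values this is automatic since the square of a nilpotent matrix is again nilpotent. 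Thus the whole task reduces to constructing a multihomogeneous $j$-central polynomial on $M_n$.

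For this construction I would exploit the embedding $M_j\otimes M_{n/j}\cong M_n$ together with the canonical symbol-algebra pair $U=\diag(1,\mu_j,\ldots,\mu_j^{j-1})\otimes I_{n/j}$ and $V=C\otimes I_{n/j}$, where $C\in M_j$ is the cyclic shift matrix. These satisfy $UV=\mu_j VU$ and $U^j=V^j=I$, and $U$ is already similar to ${\bf w}_j$. Combining this distinguished pair with Razmyslov/Formanek-style Capelli central polynomials (which serve to isolate substitutions into a $j\times j$ block sitting inside $M_n$), one assembles a multihomogeneous polynomial $f(\ul x,y,z)$ such that (i) the evaluation $f(\ul a,U,V)$ for a suitable tuple $\ul a$ yields a nonzero scalar multiple of ${\bf w}_j$---witnessing that $f$ is not a polynomial identity---and (ii) for every input the output lies in $F\cdot\mathrm{orbit}({\bf w}_j)\cup\{\text{nilpotents}\}$, so that $f^j$ is a central polynomial on $M_n$.

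The main obstacle I anticipate is step (ii): ensuring that no substitution produces a matrix of a spectral type other than $\lambda{\bf w}_j$ (up to similarity) or nilpotent. As the remark following Corollary~\ref{pot} points out, the existence of multilinear $j$-central polynomials over algebraically closed $F$ for $j\geq 3$ is a genuine open problem; what rescues us here is the freedom to let $f$ be merely multihomogeneous, together with the strict inequality $n>j$ (so the auxiliary tensor factor $M_{n/j}$ is available). This slack lets some variables appear to high degree in the $U,V$-slots and imposes enough rigidity on the image to force the desired spectral constraint. Once such $f$ is in hand, the trace computation of the first paragraph finishes the proof.
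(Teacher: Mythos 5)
Your opening reduction is clean and correct: for odd $j\ge 3$ dividing $n$ one has $\tr(\mathbf{w}_j^2)=0$, so by Theorem~\ref{koncen} any non-identity multihomogeneous $j$-central polynomial would satisfy $\tr(f^2)=0$ on $M_n$. However, the whole weight of the argument then falls on actually producing such a $j$-central polynomial on $M_n$, and this you do not do; the second paragraph describes an intended construction with the symbol-algebra pair $(U,V)$ and Capelli isolators, and the third paragraph candidly acknowledges that you cannot verify the crucial spectral constraint (ii). That admission \emph{is} the gap. Producing a $j$-central element of $UD(n)$ amounts to producing a degree-$j$ cyclic (Kummer) subfield of $UD(n)$ over its center, and for $j=\ell$ an odd prime $\ge 5$ this is essentially the open problem of whether division algebras of prime degree $\ell$ are cyclic -- the very question the paper itself flags as open in the remark following Corollary~\ref{pot}. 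The extra room afforded by $n>j$ and by dropping multilinearity in favor of multihomogeneity does not obviously resolve it; at any rate you give no argument that it does. So you have reduced the stated proposition to something at least as hard, not proved it.

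For contrast, the paper's proof bypasses power-centrality entirely. It regards $q(x)=\tr(x^2)$ as a quadratic form on $\cD=UD(n)$ over the center $\cZ$, passes to an odd-degree extension $K/\cZ$ that splits off the odd part of the index so that $\cD\otimes_\cZ K\cong M_\ell(K\otimes_\cZ\cD')$, observes that $q_K$ is trivially isotropic there (e.g.\ on a nilpotent $\ell\times\ell$ matrix with scalar entries), and then invokes Springer's theorem on isotropy under odd-degree extensions to conclude $q$ itself is isotropic over $\cZ$. Clearing central denominators turns the isotropic vector into an element of $GM(n)$, hence a noncommutative polynomial with $\tr(f^2)=0$ on $M_n$, and multihomogenization finishes. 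The key point your route misses is that isotropy of the trace form is a much weaker (and, via Springer's theorem, much more accessible) condition than power-centrality: one only needs a single vector of zero ``square trace,'' not an element whose entire $GL_n$-orbit of values is spectrally rigid.
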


\begin{proof}
\def\cD{\mathcal D}
\def\cZ{\mathcal Z}
Consider the universal division algebra 
$\cD=UD(n)$.
$\cD$ comes equipped with the reduced trace $\tr:\cD\to\cZ=Z(\cD)$. 

We claim that the quadratic trace form $q: x\mapsto \tr(x^2)$ on $\cD$ is isotropic. 
Since $\ell>1$, there is an odd degree extension $K$ of $\cZ$ such that $\cD\otimes_{\cZ} K = M_l(K\tnz_{\cZ}\cD')$ where $\cD'$ is a division ring  (see, e.g., \cite[Theorem 3.1.40]{Row}).
The natural extension $q_K$ of $q$ to $M_l(K\tnz_{\cZ}\cD')$ is 
obviously isotropic, i.e., there is $A\in M_l(K\tnz_{\cZ}1)$ with $q_K(A)=\tr(A^2)=0$.
Hence by Springer's theorem \cite[Corollary 18.5]{EKM},  $q$ is isotropic as well.
There exists $0\neq y\in\cD$ with $\tr(y^2)=0$.
We have $y=f c^{-1}$ for some $f\in GM(n)$
and $c\in Z(GM(n))$.  Replacing $y$ by $c y$, 
we may assume 
without loss of generality that $y\in GM(n)$. There is $f\in F\ax$ whose image
in $GM(n)$ coincides with $y$.

By the universal property of the reduced trace on $\cD$, $\tr(y^2)=0$ translates into
$\tr (f(\ua)^2)=0$ for all $n$-tuples $\ua$ of $n\times n$ matrices over $F$. By
(multi)homogeneizing we can even achieve that $0\neq f$ is multihomogeneous.
\end{proof}

As explained in the introduction of the paper, one would expect that multilinear polynomials that are not identities cannot satisfy 
$\tr(f^k)=0$ for $k\ge 2$. Unfortunately, we are able to prove this only in the dimension-free setting.
That is, we consider the situation where $f$ satisfies $\tr(f^k)=0$ on $M_n$ for every $n\ge 1$. This is equivalent to the condition that $f^k$ is a sum of commutators \cite[Corollary 4.8]{BK}.

\begin{proposition}\label{vsotakom}
If $f \in F\ax$ is a nonzero multilinear polynomial, then $f^k$, $k\geq 2$, is not a sum of commutators.
\end{proposition}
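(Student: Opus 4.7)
The plan is to exhibit a single cyclic equivalence class of monomials whose coefficient sum in $f^k$ is nonzero, which (by the dictionary underlying \cite[Cor.~4.8]{BK}) suffices to show $f^k\notin[F\ax,F\ax]$: a polynomial lies in $[F\ax,F\ax]$ precisely when, for every cyclic class of monomials, the sum of its coefficients in the polynomial vanishes.

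Write $f=\sum_{\sigma\in S_d}\alpha_\sigma m_\sigma$ with $m_\sigma=x_{\sigma(1)}\cdots x_{\sigma(d)}$, pick $\sigma^*$ with $\alpha_{\sigma^*}\ne 0$, and let $c_j\in S_d$ denote the cyclic shift by $j$ positions. A direct check shows that for $1\le j\le d-1$ the shift $c_j$ is \emph{indecomposable}: $c_j(\{1,\ldots,\ell\})\ne\{1,\ldots,\ell\}$ for every $1\le\ell\le d-1$. I split into two cases, according to the behaviour of $f$ on the coset $\sigma^*\langle c_1\rangle\subset S_d$.

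First, if $\alpha_{\sigma^*c_j}=0$ for all $1\le j\le d-1$, I use the class of $w:=m_{\sigma^*}^k$. Aperiodicity of $m_{\sigma^*}$ forces $w$ to have period exactly $d$, so its $d$ rotations are the $k$-th powers $m_{\sigma^*c_j}^k$; each is uniquely parsed into $k$ equal blocks and hence appears in $f^k$ with coefficient $\alpha_{\sigma^*c_j}^k$. Under the case hypothesis the class coefficient collapses to $\alpha_{\sigma^*}^k\ne 0$. Otherwise, I find $\tau:=\sigma^*c_{j_0}$ with $1\le j_0\le d-1$ in the support of $f$, so that $\sigma^{*-1}\tau=c_{j_0}$ is indecomposable; I then consider the class of $u:=m_{\sigma^*}m_\tau m_{\sigma^*}^{k-2}$. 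Because $\tau\ne\sigma^*$ occupies exactly one of the $k$ length-$d$ blocks of $u$, the word $u$ is aperiodic and its cyclic class has $kd$ rotations; the $k$ block rotations each contribute $\alpha_{\sigma^*}^{k-1}\alpha_\tau$ to the coefficient sum.

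The main point to verify---essentially the only technical work in the argument---is that no rotation by $\ell d+j'$ with $0<j'<d$ lies in the support of $f^k$. Such a rotation forces at least one length-$d$ block to straddle a $\sigma^*$-$\tau$ or $\tau$-$\sigma^*$ boundary of the original block sequence, and a short index-tracking argument reduces its membership in the support to the condition that $c_{j_0}$ preserves $\{1,\ldots,j'\}$; indecomposability of $c_{j_0}$ rules this out. Hence the cyclic coefficient of $[u]$ equals $k\alpha_{\sigma^*}^{k-1}\alpha_\tau\ne 0$, where the factor $k$ is nonzero thanks to $\mathrm{char}\,F=0$. In either case a cyclic class with nonzero coefficient sum has been produced, so $f^k$ is not a sum of commutators.
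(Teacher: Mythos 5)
Your argument is correct, and it takes a genuinely different route from the paper's. The paper also invokes \cite[Corollary 4.8]{BK}, but in the opposite direction: it passes to matrices, writes $f=\sum_i f_ix_1g_i$, polarizes $\tr(f^2)=0$ in $x_1$ to conclude that $\sum_{i,j}g_if_jx_1g_jf_i$ --- the Razmyslov transform $f^*$ composed with $f$ --- is a polynomial identity of every $M_n$ and hence vanishes in $F\ax$, and then a leading-monomial argument gives $f^*=0$ and so $f=0$, a contradiction. That proof is written out only for $k=2$ (the general case being left as ``obvious''), and, as the paper remarks, it uses only linearity in a single variable, so it yields a slightly stronger statement than yours: your case analysis (unique parsing into length-$d$ blocks, all monomials being permutation words, indecomposability of the nontrivial cyclic shifts $c_j$) genuinely relies on full multilinearity. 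In exchange, your proof is entirely combinatorial and self-contained: it replaces the matrix-theoretic input (nondegeneracy of the trace form, identities holding on $M_n$ for all $n$, the fact that $f^*=0$ iff $f=0$) by the elementary description of the span of commutators in $F\ax$ via vanishing of cyclic (necklace) coefficient sums, and it handles all $k\ge 2$ uniformly and explicitly. The details check out: the case split on whether some nontrivial rotation of $m_{\sigma^*}$ lies in the support of $f$ is really needed (otherwise the class sum $\sum_j\alpha_{\sigma^*c_j}^k$ could cancel); the straddling-block computation does reduce to the condition $c_{j_0}(\{1,\dots,j'\})=\{1,\dots,j'\}$, which indecomposability excludes; and the $k$ block rotations are pairwise distinct words, so the class sum is exactly $k\alpha_{\sigma^*}^{k-1}\alpha_\tau\ne 0$ in characteristic zero.
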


\begin{proof}
To avoid notational difficulties, we will consider only the case where $k=2$.
The modificatons needed to cover the general case are rather obvious.

If $f^2$ is a sum of commutators then  $\tr(f^2)=0$ in matrix algebras of  arbitrary dimension. 
Let us write $f=f(x_1,\dots,x_d)=\sum f_i x_1 g_i$.
 Since $\tr(f(x+y,x_2,\dots,x_d)^2-f(x,x_2,\dots,x_d)^2-f(y,x_2,\dots,x_d)^2)=0$, we have $\tr(x(\sum_{i,j}g_i f_j y g_j f_i))=0$. This implies that 
$\sum_{i,j}g_i f_j x_1 g_j f_i$ is a polynomial identity for every matrix algebra, so it has to be trivial. Denote by $f^*$ the Razmyslov transform of $f$ according to $x_1$, 
$f^*=\sum g_i x_1 f_i$. We have $f^*(f(x_1,x_2,\dots,x_d),x_2,\dots,x_d)=0$ in the free algebra  $F\ax$, which further yields $f^*=0$. Indeed, suppose $f^*\neq 0$ and choose monomials $m_1,m_2$ with nonzero coefficients in $f$ and $f^*$, respectively, which are minimal due to the first appearance of $x_1$.
Then the coefficient of the monomial  $m_2(m_1,x_2,\dots,x_d)$ in the polynomial $f^*(f(x_1,x_2,\dots,x_d),x_2,\dots,x_d)$ is nonzero, a contradiction. Hence, $f^*$ has to be zero, which leads to the contradiction $f=0$ ($f^*=0$ if and only if $f=0$, see, e.g., \cite[Proposition 12]{For}).  
\end{proof}

\begin{remark}
From the proof we deduce that only the linearity in one variable is needed. However, for general polynomials we were not able to find out whether $f^k$ can be a sum of commutators. In any case, this problem can be just a test for a more general question (see Question \ref{M8}).
\end{remark}

Let $M_\infty$ denote the algebra of all infinite matrices with finitely many nonzero entries. We write $M_\infty ^0$ for the set of elements in $M_\infty$ with zero trace, where the trace is defined as the sum of diagonal entries. 

\begin{question}\label{M8}
Is the image of an arbitrary noncommutative polynomial $f$  on $M_\infty$ a dense subset (in the Zariski topology) of $M_\infty$ or of $M_\infty^0$? Is $\im(f)=M_\infty$ or $\im(f)=M_\infty^0$?
\end{question}

 If $f^k$ is a sum of commutators for some polynomial $f$ and some $k>1$, then $\im(f)$ on $M_\infty$ is not dense in $M_\infty$, hence such a polynomial $f$ would provide a counterexample to the above question.

The question about the density in the sense of the Jacobson density theorem was settled in \cite{C-L}.

\section{Lie polynomials of degree $2,3,4$}
We prove that Lvov's conjecture holds for multilinear Lie polynomials of degree less or equal to $4$.
We use the  right-normed notation, $[x_n,\dots,x_1]$ denotes $[x_n,[x_{n-1},[\dots[x_2,x_1]]\dots]$.

\begin{lemma}\label{baza}
If $f$ is of the form $f(x_1,\dots,x_d)=[x_{i_1},x_{i_2},\dots,x_{i_{k-1}},x_1]$, where $2\leq i_j\leq d$, then $\im (f)=M_n^0$.
\end{lemma}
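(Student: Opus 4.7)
The plan is to proceed by induction on $k$, the number of entries in the right-normed commutator (equivalently, the number of variables of the multilinear polynomial $f$). The inclusion $\im(f)\subseteq M_n^0$ is immediate, since any commutator has zero trace; the content is the reverse inclusion.

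For the base case $k=2$, the polynomial is simply $f = [x_{i_1}, x_1]$, and the assertion $\im(f) = M_n^0$ is the classical theorem of Shoda (and Albert--Muckenhoupt) that every trace-zero matrix over a field is a single commutator $[b,c]$. Over $F$ algebraically closed of characteristic zero this is certainly available.

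For the inductive step, write $f = [x_{i_1}, g]$, where
\[
g(x_{i_2},\ldots,x_{i_{k-1}},x_1) = [x_{i_2},x_{i_3},\ldots,x_{i_{k-1}},x_1]
\]
is a right-normed commutator of the same shape but in $k-1$ variables, with $x_1$ innermost and the outer slots filled by the remaining $x_{i_j}$. By the inductive hypothesis, $\im(g) = M_n^0$. Now given an arbitrary $a \in M_n^0$, I would first invoke Shoda's theorem to write $a = [b, c']$ for some $b, c' \in M_n$, and then replace $c'$ by $c := c' - \tfrac{1}{n}\tr(c'){\bf 1}$. This adjustment is harmless: $[b,c] = [b,c'] = a$, but now $c \in M_n^0 = \im(g)$. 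By the inductive hypothesis applied to $g$, there exist matrix values for $x_{i_2},\ldots,x_{i_{k-1}},x_1$ on which $g$ evaluates to $c$. Setting additionally $x_{i_1} = b$ then gives $f = [b,c] = a$, as desired.

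There is no substantive obstacle in this approach: the proof reduces entirely to Shoda's theorem plus the scalar adjustment, which is required only so that the inductive hypothesis can be fed the trace-zero value $c$. The one piece of bookkeeping worth noting is that, since $f$ is multilinear, the indices $i_1,\ldots,i_{k-1}$ are distinct, so $x_{i_1}$ is genuinely a new variable of $f$ not appearing in $g$, and the substitution $x_{i_1} = b$ is independent of the choice of values for the variables of $g$.
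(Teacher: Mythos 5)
Your proof is correct only under the unstated hypothesis that the indices $i_1,\dots,i_{k-1}$ are pairwise distinct — but that is not part of the lemma, and in fact the paper crucially applies the lemma to forms with \emph{repeated} outer variables. In the proof of Lemma \ref{3} the lemma is invoked for $[x,x,y]$, and in the proof of Lemma \ref{4} for $[x,x,x,y]$; in both cases $i_1=\dots=i_{k-1}$. The statement's hypothesis is only $2\le i_j\le d$ (each outer variable is distinct from $x_1$), not that they are distinct from one another, and $f$ is not assumed multilinear. Your closing remark that multilinearity forces the $i_j$ to be distinct is where the scope is misread.

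Once repetitions are allowed, the inductive step breaks. Writing $f=[x_{i_1},g]$ with $g=[x_{i_2},\dots,x_{i_{k-1}},x_1]$, if $i_1\in\{i_2,\dots,i_{k-1}\}$ then $x_{i_1}$ is a variable of $g$ as well, so you are not free to set $x_{i_1}=b$ after having chosen an assignment making $g=c$. Concretely, for $f(x,y)=[x,[x,y]]$ your strategy is to write $a=[b,c]$ (with $c$ traceless) and then find $y$ with $[b,y]=c$; this needs $c\in\im(\mathrm{ad}_b)$, which is a proper subspace of $M_n^0$ (of codimension $n-1$ for non-derogatory $b$), so it is not automatic that the $c$ produced by the commutator decomposition of $a$ lies there. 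Some $a$ may well be reachable only through a more careful choice of $b$ and $c$, which you do not make.

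The paper's proof sidesteps all of this with a single direct substitution: fix a diagonal $s$ with distinct eigenvalues $\lambda_1,\dots,\lambda_n$, and plug $s$ into every slot $x_{i_1},\dots,x_{i_{k-1}}$ (consistent whether or not the $i_j$ coincide) and a free $x$ into $x_1$. Since $[s,m]_{ij}=(\lambda_i-\lambda_j)m_{ij}$, one gets $f(x,s,\dots,s)_{ij}=\pm(\lambda_i-\lambda_j)^{k-1}x_{ij}$, so every zero-diagonal matrix is in $\im(f)$; combining with conjugation-invariance of the image and the classical fact (Shoda) that every trace-zero matrix is similar to one with zero diagonal gives $\im(f)=M_n^0$. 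If you want to save an inductive argument, you would have to track which $b$ have $a\in\im(\mathrm{ad}_b^{\,k-1})$, which is essentially the same diagonal computation in disguise and more work than the direct approach.
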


\begin{proof}
Choose  a diagonal matrix $s$ with distinct diagonal entries $\lambda_i$, $1\leq i\leq n$.
Then $f(x,s,\dots,s)_{ij}=\pm (\lambda_i-\lambda_j)^{k-1}x_{ij}$, where $x=(x_{ij})$. Thus, $\im (f)$ contains all matrices with zero diagonal entries. 
Since $\im(f)$ is closed under conjugation and every matrix with zero trace is similar to a matrix with zero diagonal (see, e.g.,  \cite{Shoda}), we have
$\im(f)=M_n^0$.
\end{proof}

If $f$ is a Lie polynomial of degree 2, $f=\alpha [x_1,x_2]$, $\alpha\neq 0$, it has been known for a long time \cite{Shoda, Albert} that $\im(f)=M_n^0$. We list this as a lemma for the sake of reference.

\begin{lemma}\label{2}
If $f$ is a Lie polynomial of degree 2, then $\im(f)=M_n^0$.
\end{lemma}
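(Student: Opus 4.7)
The plan is very short since Lemma \ref{2} is essentially an immediate consequence of Lemma \ref{baza}. First I would observe that every multilinear Lie polynomial of degree $2$ must be of the form $f=\alpha[x_1,x_2]$ for some $0\ne\alpha\in F$; indeed, the space of multilinear Lie words of degree $2$ in $x_1,x_2$ is one-dimensional, spanned by $[x_1,x_2]$. Multiplying by the nonzero scalar $\alpha$ does not change the image, so it suffices to prove the statement for $f=[x_1,x_2]$.

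Next I would note the obvious inclusion $\im(f)\subseteq M_n^0$, which holds because $\tr([a,b])=0$ for all $a,b\in M_n$. For the reverse inclusion, I would appeal directly to Lemma \ref{baza} with $k=2$, $d=2$, and $i_1=2$: in that notation the polynomial $[x_{i_1},x_1]=[x_2,x_1]=-[x_1,x_2]$ fits the hypothesis, and the lemma yields $\im([x_2,x_1])=M_n^0$. Since $\im(f)=\im(-f)=\im([x_2,x_1])$, we conclude $\im(f)=M_n^0$.

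There is no real obstacle here: the substantive content (evaluating $[x_2,x_1]$ with $x_2$ a generic diagonal matrix to hit every matrix with zero diagonal, then invoking Shoda's theorem that every trace-zero matrix is similar to a zero-diagonal matrix) has already been carried out inside the proof of Lemma \ref{baza}. So this lemma is recorded essentially only for ease of later reference, and the proof is a one-line reduction to the previously established result together with the observation on the form of degree-$2$ Lie polynomials.
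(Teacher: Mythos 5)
Your proof is correct. The paper itself does not give an argument: it simply records that the case $f=\alpha[x_1,x_2]$ is classical and cites Shoda and Albert--Muckenhoupt directly. Your version instead folds the statement into Lemma \ref{baza} (taking $k=2$, $d=2$, $i_1=2$), which is a clean and perfectly valid reduction since $[x_2,x_1]$ is literally of the form treated there; the underlying content is the same, because Lemma \ref{baza} itself invokes Shoda's theorem to pass from zero-diagonal matrices to all trace-zero matrices. One small point of taste: you call $f$ a ``multilinear'' degree-$2$ Lie polynomial, whereas the hypothesis of the lemma only says ``Lie polynomial of degree $2$''; this is harmless here since any degree-$2$ Lie element in the free Lie algebra is a linear combination of brackets $[x_i,x_j]$, each of which falls under Lemma \ref{baza} after relabeling, but if you want to match the stated hypothesis exactly you should say a word about why a not-necessarily-multilinear degree-$2$ Lie polynomial still reduces to a single bracket up to scalar (e.g., by specializing variables, or by noting that in the intended multilinear context of the section this is automatic).
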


\begin{lemma}\label{3}
If $f$ is a multilinear Lie polynomial of degree $3$, then $\im(f)=M_n^0$.
\end{lemma}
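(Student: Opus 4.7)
The plan is to exploit the fact that the space of multilinear Lie polynomials of degree $3$ in $x_1,x_2,x_3$ has dimension $(3-1)!=2$, so the statement reduces to a short case analysis on a two-parameter family.

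First, I would fix the basis $\{[x_2,[x_3,x_1]],\,[x_3,[x_2,x_1]]\}$ (whose linear independence follows at once from the Jacobi identity) and write
$$f(x_1,x_2,x_3)=\alpha\,[x_2,[x_3,x_1]]+\beta\,[x_3,[x_2,x_1]]$$
for some $(\alpha,\beta)\neq(0,0)$. The inclusion $\im(f)\subseteq M_n^0$ is immediate because any Lie polynomial takes values in $M_n^0$, so only the reverse inclusion is at stake, and I would split the argument according to whether $\alpha+\beta$ vanishes.

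If $\alpha+\beta\neq 0$, then substituting $x_2=x_3=s$ for a diagonal matrix $s$ with pairwise distinct eigenvalues yields
$$f(x_1,s,s)=(\alpha+\beta)\,[s,[s,x_1]],$$
which is exactly the nested right-normed commutator handled by Lemma \ref{baza} (with $k=3$). Hence $\im(f)$ already contains every matrix with zero diagonal, and since $\im(f)$ is closed under conjugation, Shoda's theorem gives $\im(f)=M_n^0$.

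If $\alpha+\beta=0$, then the Jacobi identity
$$[x_2,[x_3,x_1]]-[x_3,[x_2,x_1]]=[[x_2,x_3],x_1]$$
rewrites $f$ as $f=\alpha\,[[x_2,x_3],x_1]$. By Lemma \ref{2}, $[x_2,x_3]$ sweeps out all of $M_n^0$ as $x_2,x_3$ vary over $M_n$, so it remains to verify that $\{[y,x]:y\in M_n^0,\,x\in M_n\}=M_n^0$. For any $c\in M_n^0$, Lemma \ref{2} gives $c=[a,b]$, and replacing $a$ by $a-\tfrac{\tr(a)}{n}{\bf 1}\in M_n^0$ does not alter the commutator, producing the required representation.

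I do not expect a serious obstacle: the only real content is recognising that the one-dimensional subspace of $f$'s with $\alpha+\beta=0$ is precisely the locus where the diagonal substitution $x_2=x_3=s$ collapses to zero, and that on this locus $f$ is (up to scalar) the inner derivation $[\,\cdot\,,x_1]$ applied to $[x_2,x_3]$, which is handled by Lemma \ref{2} and the trace-normalisation trick above. Everything else is bookkeeping within the two-dimensional space of multilinear Lie polynomials of degree $3$.
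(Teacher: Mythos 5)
Your proof is correct, but it takes a genuinely different path from the paper's. The paper first reduces to the normal form $f=[x_3,x_2,x_1]+\alpha[x_2,x_3,x_1]$ (i.e.\ it scales the coefficient of $[x_3,[x_2,x_1]]$ to $1$, using the symmetry $x_2\leftrightarrow x_3$ in the degenerate case) and then substitutes $x_3=x_1$. Since $[x_2,[x_1,x_1]]=0$, the second summand vanishes \emph{unconditionally}, leaving the single right-normed commutator $[x_1,[x_2,x_1]]$ to which Lemma~\ref{baza} applies directly -- no case analysis is needed. You substitute $x_2=x_3=s$ instead, giving $(\alpha+\beta)[s,[s,x_1]]$; this collapses exactly on the line $\alpha+\beta=0$, so you are forced into a second case. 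Your treatment of it is fine: Jacobi turns $f$ into $\alpha[[x_2,x_3],x_1]$, Lemma~\ref{2} realizes every trace-zero matrix as a commutator, and replacing $a$ by $a-\frac{\tr(a)}{n}{\bf 1}$ costs nothing. (You should say explicitly that in this case $\alpha\neq 0$, since $\alpha+\beta=0$ together with $(\alpha,\beta)\neq(0,0)$ forces it; you use this silently when dividing by $\alpha$.) The structural point is that identifying the innermost variable $x_1$ with one of the outer ones kills a whole basis element identically, while identifying the two outer variables only kills the one-dimensional subspace $\alpha+\beta=0$, which is why the paper's substitution avoids the split your choice necessitates.
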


\begin{proof}
We can assume that $f(x,y,z)=[z,y,x]+\alpha[y,z,x]$. If we take $x=z$, we have $f(x,y,x)=[x,y,x]=-[x,x,y]$. 
We apply Lemma \ref{baza} to conclude $M_n^0=\im(f(x,y,x))\subseteq \im(f)\subseteq M_n^0$.
\end{proof}

\begin{lemma}\label{4}
If $f$ is a multilinear Lie polynomial of degree $4$, then $\im(f)=M_n^0$.
\end{lemma}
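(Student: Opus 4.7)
The plan is to reduce any nonzero multilinear Lie polynomial $f$ of degree $4$ to a form handled by Lemma \ref{baza}, by successively specializing variables. I would first expand $f$ in the right-normed basis with $x_1$ at the innermost position:
\[
f = \sum_{\sigma \in S_3} \alpha_\sigma\,[x_{\sigma(2)}, x_{\sigma(3)}, x_{\sigma(4)}, x_1].
\]
For each $i \in \{1, 2, 3, 4\}$, consider the specialization sending $x_j \mapsto y$ for $j \neq i$ and $x_i \mapsto x$. Each such specialization produces a scalar multiple of $[y, y, y, x]$ (or a reindexing), whose coefficient is a specific linear combination of the $\alpha_\sigma$. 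If any one of these four coefficients is nonzero, Lemma \ref{baza} applied to the specialization yields $M_n^0 \subseteq \im(f)$, and we are done.

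If all four coefficients vanish, a brief linear computation shows that $f$ lies in the $3$-dimensional subspace of ``double commutators''
\[
f = \beta_1[[x_2, x_3], [x_4, x_1]] + \beta_2[[x_2, x_4], [x_3, x_1]] + \beta_3[[x_3, x_4], [x_2, x_1]],
\]
with $(\beta_1, \beta_2, \beta_3) \neq (0, 0, 0)$. For such $f$ I would consider the six pair-identifying substitutions $x_i = x_j = u$ ($1 \leq i < j \leq 4$); each turns $f$ into a scalar multiple of $[[u, b], [u, c]]$ in the remaining two variables. The six resulting scalars are $\beta_2 - \beta_1$, $\beta_1 + \beta_3$, $\beta_2 - \beta_3$, $\beta_2 + \beta_3$, $\beta_1 - \beta_3$, and $-(\beta_1 + \beta_2)$; adding and subtracting the third and fourth of these forces $\beta_2 = \beta_3 = 0$, and the first then forces $\beta_1 = 0$. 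So they cannot all vanish for nonzero $f$, and the problem is reduced to proving $\im([[u, b], [u, c]]) = M_n^0$ on $M_n$.

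This last step is the main obstacle. I would handle it by fixing $u = \diag(\mu_1, \dots, \mu_n)$ with distinct $\mu_i$ and using the explicit formula
\[
[[u, b], [u, c]]_{ij} = \sum_{k \neq i, j}(\mu_i - \mu_k)(\mu_k - \mu_j)(b_{ik} c_{kj} - c_{ik} b_{kj}) \qquad (i \neq j),
\]
together with an analogous quadratic-in-$(\mu_i-\mu_k)$ expression on the diagonal. For $n \geq 3$, the sparse choices $b = e_{i k_0}$, $c = \alpha\, e_{k_0 j}$ (with $k_0 \neq i, j$) realize any scalar multiple of each $e_{ij}$ with $i \neq j$, while $b = e_{ij}$, $c = \alpha\, e_{ji}$ realizes each $e_{ii} - e_{jj}$; more general zero-diagonal targets are reached by a matching construction that exploits the $2(n^2 - n)$ free parameters in $(b, c)$ against the $n^2 - 1$ independent constraints imposed by the target (for instance, one checks that the regular nilpotent $e_{12} + e_{23}$ in $M_3$ is attained by $b = -\tfrac{1}{2} e_{21} + \tfrac{1}{2} e_{32}$, $c = e_{13}$). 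Combined with Shoda's theorem and the $GL_n$-invariance of $\im$, this yields $\im([[u, b], [u, c]]) = M_n^0$. The case $n = 2$ is degenerate because the sum above is empty, so I would instead take $u = e_{12}$ and verify directly that both the nilpotent and the semisimple orbit of $M_2^0$ are attained. Establishing full surjectivity of the bilinear system in $(b, c)$ onto all zero-diagonal matrices, beyond the rank-one pieces above, is where the technical work concentrates, and is the place where a cleaner representation-theoretic reduction would be most welcome.
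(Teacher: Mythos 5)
Your reduction of $f$ to the double-commutator case and then, via the pair-identifications $x_i=x_j$, to the single polynomial $[[u,b],[u,c]]$ follows essentially the same route as the paper (the paper phrases it as forcing $\alpha_4+\alpha_5=\alpha_2+\alpha_3=1+\alpha_1=0$ and then specializing $x_2=x_3$ and $x_1=x_4$). The problem is the last step, where you yourself flag the gap: you have not proved that $\im\bigl([[u,b],[u,c]]\bigr)=M_n^0$. Realizing each scalar multiple of $e_{ij}$ ($i\neq j$) and each $e_{ii}-e_{jj}$ separately does not suffice, because the image of a polynomial map is not known to be closed under addition --- that closure is exactly the content of Lvov's conjecture, so assuming it here is circular. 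Likewise, counting the $2(n^2-n)$ parameters in $(b,c)$ against the $n^2-1$ constraints is only a heuristic: a bilinear (indeed polynomial) system can fail to be surjective even when the dimensions are favorable, and no argument is given that the generic fiber has the expected dimension or that the map is dominant onto $M_n^0$, let alone surjective (dominance would anyway only give density, not equality, which is what the lemma asserts).

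The paper closes this gap with two classical facts. First, for a diagonal $s$ with distinct eigenvalues, the image of $b'\mapsto [b',s]$ is exactly the set of zero-diagonal matrices, so $[[b',s],[c',s]]$ ranges over all commutators $[b,c]$ with $b,c$ of zero diagonal. Second, by (the proof of) \cite[Theorem 1]{Smith}, every trace-zero matrix of rank at least two is similar to such a commutator; combined with the $GL_n$-invariance of the image this covers everything in $M_n^0$ except the rank-one orbit, which is handled by the single explicit evaluation $f(e_{21},e_{12},e_{12},-\tfrac12 e_{11})$. If you want to salvage your computational approach you would need to prove, for each fixed conjugacy class in $M_n^0$, that some representative is attained --- which is in effect re-proving Smith's theorem; citing it is the cleaner path. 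Note also that the conclusion of the argument is a dichotomy (either the coefficient in front of $[[z,y],[y,x]]$ vanishes or the image is everything), and the paper needs the second specialization $x_1=x_4$ to rule out the simultaneous vanishing; your six-scalar computation plays the same role and looks fine, but it is contingent on first settling the surjectivity of $[[u,b],[u,c]]$.
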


\begin{proof}
It is easy to see that the monomials $[x_i,x_j,x_k,x_1]$, $\{i,j,k\}=\{2,3,4\}$, form a basis of multilinear Lie polynomials of degree 4. We can assume that 
\begin{eqnarray*}
f(x_1,x_2,x_3,x_4)&=&[x_4,x_3,x_2,x_1]+\alpha_1[x_3,x_4,x_2,x_1]+\alpha_2[x_4,x_2,x_3,x_1]+\alpha_3[x_2,x_4,x_3,x_1]+\\
&&\alpha_4[x_3,x_2,x_4,x_1]+\alpha_5[x_2,x_3,x_4,x_1].
\end{eqnarray*}
Consider $f(x,x,x,y)=(\alpha_4+\alpha_5)[x,x,y,x]=-(\alpha_4+\alpha_5)[x,x,x,y]$. Due to Lemma \ref{baza} we can assume $\alpha_4+\alpha_5= 0$.
Similarly, setting $x_1=x_2=x_4$ and $x_1=x_3=x_4$ yields $\alpha_2+\alpha_3=0$ and $1+\alpha_1=0$, respectively.  
Hence we can write 
\begin{eqnarray*}
f(x_1,x_2,x_3,x_4)&=&[x_4,x_3,x_2,x_1]-[x_3,x_4,x_2,x_1]+\alpha_2([x_4,x_2,x_3,x_1]-[x_2,x_4,x_3,x_1])+\\
& &\alpha_4([x_3,x_2,x_4,x_1]-[x_2,x_3,x_4,x_1])\\
                                &=&[[x_4,x_3],x_2,x_1]+\alpha_2[[x_4,x_2],x_3,x_1]+\alpha_4[[x_3,x_2],x_4,x_1].
\end{eqnarray*}
Then $f(x,y,y,z)=(1+\alpha_2)[[z,y],y,x]=(1+\alpha_2)[[z,y],[y,x]]$.
It follows from the proof of \cite[Theorem 1]{Smith}  that any matrix with zero trace except for the rank-one matrix is similar to a commutator of two matrices
with zero diagonal.  Choose a diagonal matrix $s$ with distinct diagonal entries. Take $a\in M_n^0$ with rank at least 2. Since $\im(f)$ is closed under conjugation by $GL_n$, we may assume that $a=[b,c]$ where $b,c\in M_n$ have zero diagonal.
Hence, we can write $b=[b',s],\; c=[c',s]$ for some $b',c'\in M_n$. Thus $(1+\alpha_2)a=f(b',s,s,c')$. If $a\in M_n^0$ has rank one, then $a$ is similar to the matrix unit $e_{12}$ and $(1+\alpha_2)e_{12}=f(e_{21},e_{12},e_{12},-\frac{1}{2}e_{11})$. Hence, $1+\alpha_2=0$ or $\im(f)$ contains all matrices with zero trace. Therefore we can assume $1+\alpha_2=0$. If we set $x_1=x_4$ we get in a similar way that $1-\alpha_2=0$, which leads to a contradiction. Hence, $M_n^0\subseteq \im(f)\subseteq M_n^0$ yields the desired conclusion.
\end{proof}

\begin{proposition}\label{Lie}
If $f$ is a nonzero multilinear Lie polynomial of degree at most $4$, then $\im(f)=M_n^0$.
\end{proposition}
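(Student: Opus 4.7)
The plan is to reduce the proposition to a degree-by-degree case analysis resting on the preceding three lemmas. Every value of a Lie polynomial is a sum of commutators, so $\im(f)\subseteq M_n^0$ is automatic and only the reverse containment needs attention; up to scalars a nonzero multilinear Lie polynomial has degree $2$, $3$, or $4$ (a degree-$1$ polynomial is $\alpha x_1$ and is not really a Lie polynomial for our purposes).

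For $k=2$ the only option up to scalar is $f=\alpha[x_1,x_2]$, and Shoda's classical theorem that every trace-zero matrix is a single commutator is precisely Lemma~\ref{2}. For $k\in\{3,4\}$ the engine is Lemma~\ref{baza}: if we can specialize $f$ by identifying some of its variables so that it collapses to a nonzero scalar multiple of a right-normed monomial $[x_{i_1},\dots,x_{i_{k-1}},x_1]$, then the image contains every matrix with zero diagonal; combined with conjugation-invariance of $\im(f)$ and Shoda's theorem (every trace-zero matrix is similar to one with zero diagonal), this gives $M_n^0\subseteq\im(f)$. Thus for $k=3,4$ my approach is to fix a basis of multilinear Lie polynomials of degree $k$, write $f$ in it, and then run through a short list of variable identifications: each specialization either produces a surviving right-normed bracket (and we win by Lemma~\ref{baza}) or forces a linear relation on the coefficients of $f$. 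We iterate until either the image is shown to contain $M_n^0$ or the accumulated coefficient constraints are inconsistent with $f\neq 0$.

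The main obstacle will be the degree-$4$ case. The space of multilinear Lie polynomials of degree $4$ is six-dimensional, and the three most natural substitutions that set three of four variables equal produce only three relations of the form ``sum of two coefficients vanishes,'' leaving a three-parameter residual family. To eliminate this family one uses the two-variable specialization $f(x,y,y,z)$, which collapses to a multiple of $[[z,y],[y,x]]$. Showing that this bracketed form already has image $M_n^0$ requires a finer input than Lemma~\ref{baza}, namely Smith's result that every trace-zero matrix of rank at least two is a commutator of two matrices with zero diagonal, supplemented by a direct rank-one check using matrix units. The resulting nonvanishing coefficient condition combined with the earlier constraints is inconsistent, so one of the right-normed specializations must already succeed and the case is settled.

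Assembling the outcomes of $k=2,3,4$ yields $\im(f)=M_n^0$ for every nonzero multilinear Lie polynomial $f$ of degree at most $4$, as required.
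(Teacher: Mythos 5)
Your proposal follows the paper's proof essentially verbatim: degree $2$ is Lemma~\ref{2}; degree $3$ is handled by setting $x_1=x_3$ so that $f$ collapses to a right-normed bracket and Lemma~\ref{baza} applies; degree $4$ uses the three ``two coefficients sum to zero'' constraints from the triple identifications (either one of them fails and Lemma~\ref{baza} finishes, or all three hold), after which $f(x,y,y,z)\propto[[z,y],[y,x]]$ together with Smith's theorem for rank $\ge 2$ and the matrix-unit check for rank $1$ forces the final inconsistency. This is the same decomposition, the same key lemmas, and the same closing contradiction as in the paper.
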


\begin{proof}
Apply Lemmas \ref{2}, \ref{3}, \ref{4}.
\end{proof}

{\bf Acknowledgement.} The author would like to thank to her supervisor Matej Bre\v sar for many fruitful discussions and insightful comments, and to Igor Klep and Klemen \v Sivic for generous sharing of ideas.

\end{document}